\newcommand{\N}{\mathbb{N}}
\newcommand{\R}{\mathbb{R}}
\newcommand{\C}{\mathbb{C}}
\newcommand{\EE}{\mathbb{E}}
\newcommand{\bb}[1]{\boldsymbol{#1}}
\begin{document}

\section{Introduction}\label{sec:intro}

    Following recent conjectures of \cite{FyodorovHiaryKeating2012} and \cite{MR3151088} about the limiting law of the {\it Gibbs measure} and the limiting law of the maximum for the Riemann zeta function on bounded random intervals of the critical line, progress have been made in the mathematics literature.
    If $\tau$ is sampled uniformly in $[T,2T]$ for some large $T$, then it is expected that the limiting law of the Gibbs measure (see \eqref{def:gibbs.measure}) at low temperature for the field $(\log |\zeta(\frac{1}{2} + i(\tau + h))|, h\in [0,1])$ is a one-level Ruelle probability cascade (see e.g.\hspace{-0.3mm} \cite{MR875300}) and the law of the maximum is asymptotic to $\log \log T - \frac{3}{4} \log \log \log T + \mathcal{M}_T$ where $(\mathcal{M}_T, T\geq 2)$ is a sequence of random variables converging in distribution.
    For a randomized version of the Riemann zeta function (see \eqref{def:X}), the first order of the maximum was proved in \cite{arXiv:1304.0677}, the second order of the maximum was proved in \cite{MR3619786}, and the limiting two-overlap distribution was found in \cite{arXiv:1706.08462} (see Theorem \ref{thm:limiting.two.overlap.distribution} below). The tightness of the recentered maximum is still open (see \cite{arXiv:1807.04860}).
    In this short paper, we complete the analysis of \cite{arXiv:1706.08462} by proving the Ghirlanda-Guerra (GG) identities in the limit $T\to \infty$ (see Theorem \ref{thm:extended.GG.identities}).
    As is well known in the spin glass literature (see e.g.\hspace{-0.3mm} Chapter 2 in \cite{MR3052333}), the limiting law of the two-overlap distribution, with a finite support, together with the GG identities allow a complete description of the limiting law of the Gibbs measure as a {\it Ruelle probability cascade} with finitely many levels (a random measure with a tree structure and Poisson-Dirichlet weights at each level).
    Our main result (Theorem \ref{thm:Poisson.Dirichlet}) describes the joint law of the overlaps under the limiting mean Gibbs measure in terms of Poisson-Dirichlet weights.
    It is expected that the approach presented here, which mostly stems from the work of \cite{MR3211001}, \cite{MR2070334} and \cite{MR3052333} on other models, can be adapted to prove the same result for the (true) Riemann zeta function on bounded random intervals of the critical line. At present, for the (true) Riemann zeta function, the first order of the maximum is proved conditionally on the Riemann hypothesis in \cite{doi:10.1007/s00440-017-0812-y} and unconditionally in \cite{arXiv:1612.08575}.

    The paper is organised as follows.
    In Section \ref{sec:definitions}, we give a few definitions.
    In Section \ref{sec:main.result}, the main result is stated and shown to be a consequence of the GG identities and the main result from \cite{arXiv:1706.08462} about the limiting two-overlap distribution.
    In Section \ref{sec:known.results}, we state known results from \cite{arXiv:1706.08462} that we will use to prove the GG identities.
    The GG identities are proven in Section \ref{sec:proof.GG} along with other preliminary results, see the structure of the proof in Figure \ref{fig:proof.structure}.
    For an explanation of the consequences of the GG identities and their conjectured universality for mean field spin glass models, we refer the reader to \cite{MR3628881}, \cite{MR3052333} and \cite{MR3024566}.

\section{Some definitions}\label{sec:definitions}

    Let $(U_p, p ~\text{primes})$ be an i.i.d.\hspace{-0.3mm} sequence of uniform random variables on the unit circle in $\C$.
    The random field of interest is
    \begin{equation}\label{def:X}
        X_h \circeq \sum_{p \leq T} W_p(h) \circeq \sum_{p \leq T} \frac{\text{Re}(U_p \, p^{-i h})}{p^{1/2}}, \quad h\in [0,1].
    \end{equation}
    This is a good model for the large values of $(\log |\zeta(\frac{1}{2} + i(\tau + h))|, h\in [0,1])$ for the following reason.
    Proposition 1 in \cite{arXiv:1304.0677} proves that, assuming the Riemann hypothesis, and for $T$ large enough, there exists a set $B\subseteq [T,T+1]$, of Lebesgue measure at least $0.99$, such that
    \begin{equation}
        \log |\zeta(\frac{1}{2} + i t)| = \text{Re}\left(\sum_{p \leq T} \frac{1}{p^{1/2 + it}} \frac{\log(T / p)}{\log T}\right) + O(1), \quad t\in B.
    \end{equation}
    If we ignore the smoothing term $\log(T / p) / \log T$ and note that the process $(p^{-i\tau}\hspace{-1mm}, p ~\text{primes})$, where $\tau$ is sampled uniformly in $[T,2T]$, converges (in the sense of convergence of its finite-dimensional distributions), as $T\to\infty$, to a sequence of independent random variables distributed uniformly on the unit circle (by computing the moments), then the model \eqref{def:X} follows.
    For more information, see Section 1.1 in \cite{MR3619786}.

    For simplicity, the dependence in $T$ will be implicit everywhere for $X$. Summations over $p$'s and $q$'s always mean that we sum over primes.
    For $\alpha\in [0,1]$, we denote truncated sums of $X$ as follows :
    \begin{equation}\label{def:X.alpha}
        X_h(\alpha) \circeq \sum_{p \leq \exp((\log T)^{\alpha})} W_p(h), \quad h\in [0,1],
    \end{equation}
    where $\sum_{\emptyset} \circeq 0$. Define the {\it overlap} between two points of the field by
    \begin{equation}\label{eq:correlation}
        \rho(h,h') \circeq \frac{\EE[X_h X_{h'}]}{\sqrt{\EE[X_h^2] \EE[X_{h'}^2]}}, \quad h,h'\in [0,1].
    \end{equation}
    For any $\alpha\in [0,1]$ and any $\beta > 0$, define the {\it (normalized) free energy of the perturbed model} by
    \begin{equation}\label{eq:free.energy}
        f_{\alpha,\beta,T}(u) \circeq \frac{1}{\log \log T} \log \int_0^1 e^{\beta (u X_h(\alpha) + X_h)} dh, \quad u > -1.
    \end{equation}
    The parameter $u$ is there to allow perturbations in the correlation structure of the model.
    When $u = 0$, we recover the {\it free energy}.
    Finally, for any Borel set $A\in \mathcal{B}([0,1])$, define the {\it Gibbs measure} by
    \begin{equation}\label{def:gibbs.measure}
        G_{\beta,T}(A) = \int_A \frac{e^{\beta X_h}}{\int_{[0,1]} e^{\beta X_{h'}} dh'} dh.
    \end{equation}
    The parameter $\beta$ is called the {\it inverse temperature} in statistical mechanics.

    \section{Main result}\label{sec:main.result}

    The main result of this article is to present a complete description of the joint law of the overlaps for the model \eqref{def:X}, under the {\it limiting mean Gibbs measure}
    \begin{equation}\label{eq:limiting.mean.Gibbs.measure}
        \lim_{T\to \infty} \EE G_{\beta,T}.
    \end{equation}
    We will show that, when $\beta > \beta_c \circeq 2$, this measure is the expectation $E$ of a random measure $\mu_{\beta}$ sampling orthonormal vectors in an infinite-dimensional separable Hilbert space, where the probability weights follow a
    \begin{equation*}
        \text{Poisson-Dirichlet distribution of parameter $\beta_c / \beta$.}
    \end{equation*}

    This is done through what is called the {\it Ghirlanda-Guerra identities}.
    These identities first appeared in \cite{MR1662161} and, 15 years later, it was proved in a celebrated work of Panchenko \cite{MR2999044} (a simple proof is given in \cite{MR2825947} when $E \mu_{\beta}$ has a finite support) that if a random measure on the unit ball of a separable Hilbert space satisfies an extended version of the Ghirlanda-Guerra identities, then we must have ultrametricity (a tree-like structure) of the overlaps under the mean of this random measure. This was an important step because it was well-known following the publication of \cite{MR1662161} that the Ghirlanda-Guerra identities and ultrametricity together completely determine the joint law of the overlaps, up to the distribution of one overlap.
    See e.g., Theorem 6.1 in \cite{BaffioniRosati2000}, Section 1.2 in \cite{MR1993891} (in the context of the REM model from \cite{MR575260}) and Theorem 1.13 in \cite{MR2070334} (in the context of the GREM model from \cite{Derrida1985}).

    Thus, from the work of Panchenko, proving the (extended) Ghirlanda-Guerra identities under \eqref{eq:limiting.mean.Gibbs.measure} implies ultrametricity and, consequently, determines the joint law of the overlaps, up to
    the {\it limiting two-overlap distribution}
    \begin{equation}\label{eq:limiting.two.overlap.distribution}
        \lim_{T\to\infty} \EE G_{\beta,T}[\bb{1}_{\{\rho(h,h') \in \, \cdot \, \}}],
    \end{equation}
    which \cite{arXiv:1706.08462} already determined for the model \eqref{def:X}.

    \begin{theorem}[Theorem 1 in \cite{arXiv:1706.08462}]\label{thm:limiting.two.overlap.distribution}
        For any $\beta > \beta_c \circeq 2$ and any Borel set $A\in \mathcal{B}([0,1])$,
        \begin{equation}\label{eq:thm:limiting.two.overlap.distribution.eq}
            \lim_{T\to \infty} \EE G_{\beta,T}^{\times 2} \big[\bb{1}_{\{\rho(h,h') \in A\}}\big] = \frac{2}{\beta} \bb{1}_A(0) + \left(1 - \frac{2}{\beta}\right) \bb{1}_A(1).
        \end{equation}
    \end{theorem}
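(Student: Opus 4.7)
I would prove the statement in two independent steps: first, establish that the limiting two-overlap distribution is supported on $\{0,1\}$; then, compute the mass $1-2/\beta$ at overlap $1$ from the Poisson point process limit of the extremes, which implies the complementary mass $2/\beta$ at overlap $0$.

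\emph{Step 1: localization on $\{0,1\}$.} The goal is $\EE G_{\beta,T}^{\times 2}[\rho(h,h') \in (\epsilon, 1-\epsilon)] \to 0$ for every $\epsilon > 0$. The plan is to exploit the multiscale decomposition $X_h = X_h(\alpha) + (X_h - X_h(\alpha))$ into independent pieces coming from small and large primes, together with the fact that an overlap $\rho(h,h') \approx \alpha$ corresponds to pairs that agree at the coarse scale $X(\alpha)$ and decorrelate beyond. I would bound the restricted partition function
\begin{equation*}
    \EE \int\!\!\!\int_{\{\rho(h,h') \in [\alpha, \alpha + \Delta]\}} e^{\beta(X_h + X_{h'})} \, dh\, dh'
\end{equation*}
by conditioning on the coarse field $X(\alpha)$ and applying Gaussian-like moment estimates (obtained via cumulant expansions on the products $\prod_p U_p^{k_p}$) to the independent fine fluctuations. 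Combined with a high-probability lower bound $Z_{\beta,T} \geq c\,(\log T)^{\beta}$ on the partition function in the condensed phase $\beta > 2$ — which follows from a truncated second-moment argument in the style of \cite{arXiv:1304.0677} and \cite{MR3619786} — this should yield a polylogarithmic gain $(\log T)^{-\delta(\alpha, \Delta)}$ for the Gibbs mass of pairs with $\rho \in [\alpha, \alpha+\Delta]$, and a union bound over a finite $\alpha$-net covers the strip $(\epsilon, 1 - \epsilon)$.

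\emph{Step 2: identifying the weights.} Once the overlap is supported on $\{0,1\}$, the mass at $\rho = 1$ equals the expected $\ell^2$-mass of the Gibbs measure on the extremal set. The second-order asymptotics $\max_h X_h = \log\log T - \tfrac{3}{4}\log\log\log T + O(1)$ from \cite{MR3619786}, together with a near-maxima counting argument, imply that the extremal process of $X_h$ recentred by $\log\log T - \tfrac{3}{4}\log\log\log T$ converges to a (decorated) Poisson point process on $\R$ of intensity $2\,e^{-2x}\,dx$, the $2$ being exactly $\beta_c$. The normalized Gibbs weights $w_i \propto e^{\beta M_i}$ built from such a Poisson process then follow a Poisson-Dirichlet distribution with parameter $s = \beta_c/\beta = 2/\beta$, and the classical identity $\EE \sum_i w_i^2 = 1 - s$ gives the mass $1 - 2/\beta$ at overlap $1$.

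\emph{Main obstacle.} The hard part is Step~1. The field $X_h$ is not Gaussian, so the Gaussian interpolation toolbox (Guerra, Kahane) is unavailable and moments of $e^{\beta X_h}$ must be controlled through careful cumulant expansions exploiting the independence and uniform distribution of the $U_p$ on the unit circle. Moreover, producing the polylogarithmic gain $\delta(\alpha,\Delta)$ uniformly in $\alpha \in (\epsilon, 1 - \epsilon)$ requires a delicate truncation to rule out configurations in which the coarse field $X_h(\alpha)$ is atypically large on a non-negligible set of $h$'s — an adaptation to the prime-indexed setting of the modified second-moment method standard in the analysis of log-correlated fields such as BBM and the two-dimensional Gaussian free field.
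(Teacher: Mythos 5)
The statement you are asked to prove is Theorem~1 of \cite{arXiv:1706.08462}; the present paper only cites it, but the introduction and Remark~\ref{rem:beta.cases.explanation} describe the method actually used there: the Bovier--Kurkova technique, i.e.\ compute the limiting perturbed free energy $f_{\alpha,\beta}(u)$ as a function of the perturbation parameter $u$ (Proposition~\ref{prop:convergence.free.energy}), then link $\EE[f'_{\alpha,\beta,T}(0)]$ to $\int_0^\alpha \EE G_{\beta,T}^{\times 2}[\bb{1}_{\{\rho\le y\}}]\,dy$ by an approximate integration by parts (Proposition~\ref{prop:mean.convergence.derivative.free.energy}) and finish using convexity of the free energy. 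Your plan is a genuinely different, extremal-process-based route: localize the overlap on $\{0,1\}$ by a modified second-moment method, then read off the mass at~$1$ from a Poisson point process limit of the near-maxima and the Poisson--Dirichlet identity $E\sum_k \xi_k^2 = 1-\theta$.

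The problem is that your Step~2 relies on a result that is not available for this model. Deducing that the normalized Gibbs weights converge to a Poisson--Dirichlet variable of parameter $2/\beta$ requires convergence of the extremal process (equivalently, of the recentered near-maxima point process) to a decorated Poisson point process of intensity proportional to $e^{-2x}\,dx$. But, as this very paper points out, even tightness of the recentered maximum is still open for the field $X_h$ in~\eqref{def:X} (the cited reference is \cite{arXiv:1807.04860}); convergence of the full extremal process is a strictly stronger and equally open problem. So Step~2 cannot be completed as stated without first resolving an open question, and your proposal would not yield a proof of the theorem. This is precisely the obstacle the Bovier--Kurkova approach is designed to sidestep: the free energy $f_{\alpha,\beta,T}$ is a much softer statistic than the extremal process (it depends on the full occupation profile, not just on which levels are hit near the top), and its limit plus convexity already encode the two-overlap distribution via Griffiths' lemma, with no need for tightness or Poissonian structure of the extremes. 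Your Step~1, in contrast, is a plausible though technically heavy alternative to the free-energy computation, but on its own it only gives localization on $\{0,1\}$, not the weights $2/\beta$ and $1-2/\beta$. Note also that the Poisson--Dirichlet description of the Gibbs weights is the \emph{conclusion} of this paper (Theorem~\ref{thm:Poisson.Dirichlet}), derived from Theorem~\ref{thm:limiting.two.overlap.distribution} plus the Ghirlanda--Guerra identities; using it as an input to prove Theorem~\ref{thm:limiting.two.overlap.distribution} would be circular in the logical structure of this line of work.
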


    \begin{remark}
        The limiting two-overlap distribution in \eqref{eq:thm:limiting.two.overlap.distribution.eq} can be interpreted as a measure of relative distance between the extremes of the model.
    \end{remark}

    To state our main result, recall the definition of a Poisson-Dirichlet variable.
    For $0 < \theta < 1$, let $\eta = (\eta_i)_{i\in \N^*}$ be the atoms of a Poisson random measure on $(0,\infty)$ with intensity measure $\theta x^{-\theta - 1} dx$.
    A {\it Poisson-Dirichlet variable} $\xi$ of parameter $\theta$ is a random variable on the space of decreasing weights
    \begin{equation}\label{eq:space.decreasing.weights}
        \left\{(x_1,x_2,\ldots)\in [0,1]^{\N^*} :
        \begin{array}{l}
            1 \geq x_1 \geq x_2 \geq \ldots \geq 0  \\[1mm]
            \text{and}~ \sum_{i=1}^{\infty} x_i = 1
        \end{array}
        \right\}
    \end{equation}
    which has the same law as
    \begin{equation}
        \xi \stackrel{\text{law}}{=} \left(\frac{\eta_i}{\sum_{j=1}^{\infty} \eta_j}, ~i\in \N^*\right)_{\downarrow},
    \end{equation}
    where $\downarrow$ stands for the decreasing rearrangement.

    Here is the main result.

    \begin{theorem}[Main result]\label{thm:Poisson.Dirichlet}
        Let $\beta > \beta_c \circeq 2$ and let $\xi = (\xi_k)_{k\in \N^*}$ be a Poisson-Dirichlet variable of parameter $\beta_c / \beta$.
        Denote by $E$ the expectation with respect to $\xi$.
        For any continuous function $\phi : [0,1]^{s(s-1)/2} \to \R$ of the overlaps of $s$ points,
        \begin{equation}\label{eq:thm:Poisson.Dirichlet.eq}
            \begin{aligned}
            &\lim_{T\to \infty} \EE G_{\beta,T}^{\times s} \Big[\phi\Big(\big(\rho(h_l,h_{l'})\big)_{1 \leq l,l' \leq s}\Big)\Big] \\
            &\hspace{20mm}= E\left[\sum_{k_1,\ldots,k_s\in \N} \xi_{k_1} \cdots \xi_{k_s} \phi\Big(\big(\bb{1}_{\{k_l = k_{l'}\}}\big)_{1 \leq l,l' \leq s}\Big)\right].
            \end{aligned}
        \end{equation}
    \end{theorem}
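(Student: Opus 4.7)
The plan is to derive Theorem~\ref{thm:Poisson.Dirichlet} as a direct consequence of the extended Ghirlanda-Guerra identities (Theorem~\ref{thm:extended.GG.identities}) combined with the limiting two-overlap distribution (Theorem~\ref{thm:limiting.two.overlap.distribution}), via Panchenko's ultrametricity theorem from~\cite{MR2999044}. First, I would regard the array of overlaps $(\rho(h_l,h_{l'}))_{l,l' \geq 1}$ sampled under $\EE G_{\beta,T}^{\times \N^*}$ as a random element of the compact space $[0,1]^{\N^* \times \N^*}$, so tightness in $T$ is automatic. Any subsequential limit in distribution is positive semidefinite (as a limit of Gram matrices) and can therefore be realized as $(\langle \sigma_l, \sigma_{l'} \rangle)_{l,l' \geq 1}$ for an i.i.d.\ sample $(\sigma_l)$ from some random probability measure $\mu_\beta$ on the unit ball of a separable Hilbert space, in the sense of Dovbysh-Sudakov.

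Next, by Theorem~\ref{thm:extended.GG.identities} the limiting overlap array satisfies the extended Ghirlanda-Guerra identities, and Panchenko's theorem then yields ultrametricity of the overlaps under $E \mu_\beta^{\otimes \N^*}$ almost surely. Theorem~\ref{thm:limiting.two.overlap.distribution} pins the support of $\rho(h,h')$ to $\{0,1\}$ with masses $2/\beta$ and $1 - 2/\beta$. Ultrametricity together with this binary support forces an equivalence-relation structure on replicas: there exist random orthonormal vectors $(\sigma_k)_{k \in \N^*}$ and decreasing random weights $(\xi_k)_{k \in \N^*}$ with $\xi_k \geq 0$ and $\sum_k \xi_k = 1$, such that $\mu_\beta = \sum_k \xi_k \delta_{\sigma_k}$ and $\langle \sigma_k, \sigma_{k'} \rangle = \bb{1}_{\{k = k'\}}$. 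Identifying $(\xi_k)$ as a Poisson-Dirichlet variable of parameter $2/\beta$ is then the classical content of one-level Ruelle cascade theory (see Chapter~2 of~\cite{MR3052333} and Theorem~1.13 in~\cite{MR2070334}): among random measures on an orthonormal family, the Ghirlanda-Guerra identities with overlap mass $2/\beta$ at zero characterize Poisson-Dirichlet weights of parameter $2/\beta$. Substituting this into the representation of $E\mu_\beta^{\otimes s}$ and using the continuity of $\phi$ to pass finite-dimensional weak convergence through to expectations delivers the right-hand side of~\eqref{eq:thm:Poisson.Dirichlet.eq}.

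The main obstacle I anticipate is not any individual step but coordinating them correctly. Panchenko's theorem and the cascade identification both require the \emph{extended} form of the Ghirlanda-Guerra identities---valid for arbitrary bounded continuous test functions of several overlaps, not merely monomials---so the precise formulation of Theorem~\ref{thm:extended.GG.identities} has to feed in cleanly. Moreover, the argument above identifies every subsequential limit of the overlap array by the same law, so the full sequence converges, but one has to verify this uniqueness explicitly. The remaining bookkeeping is routine: converting the Hilbert-space representation of $E\mu_\beta^{\otimes s}\bigl[\phi\bigl((\langle \sigma_l,\sigma_{l'}\rangle)_{l,l'}\bigr)\bigr]$ into the indexed sum $\sum_{k_1,\ldots,k_s} \xi_{k_1}\cdots\xi_{k_s}\,\phi\bigl((\bb{1}_{\{k_l=k_{l'}\}})_{l,l'}\bigr)$ is a direct expansion, and the compactness of the overlap space makes approximation of continuous $\phi$ by bounded functions harmless.
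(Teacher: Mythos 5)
Your proposal is correct and follows essentially the same route as the paper, which simply outsources the details to Theorem 1.5 of \cite{MR3211001}: Dovbysh--Sudakov representation of subsequential limits, Ghirlanda--Guerra identities plus the two-overlap law to identify the limiting random measure as a one-level Ruelle cascade with Poisson--Dirichlet$(\beta_c/\beta)$ weights on orthonormal atoms, uniqueness of the subsequential limit in the metrizable space of overlap-array laws, and finally a direct expansion of $E\mu_\beta^{\otimes s}$. One small remark: invoking Panchenko's general ultrametricity theorem is slightly more than needed here. Once Theorem~\ref{thm:limiting.two.overlap.distribution} forces $R_{1,2}\in\{0,1\}$ almost surely and the Dovbysh--Sudakov measure sits on the unit sphere, the random measure is automatically purely atomic on an orthonormal family and ultrametricity of the overlap array is automatic; this is the ``finite-support'' simplification the paper alludes to via \cite{MR2825947}. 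The Ghirlanda--Guerra identities are then needed only to pin down the \emph{weights} as Poisson--Dirichlet of the correct parameter, not to establish the tree structure. This does not affect the correctness of your argument, only its economy.
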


    \begin{remark}
        The domain of $\phi$ is $[0,1]^{s(s-1)/2}$ here because the matrix $(\rho(h_l,h_{l'}))_{1 \leq l,l' \leq s}$ is symmetric and has $1$'s on the diagonal.
    \end{remark}

    \begin{remark}
        The proof of Theorem \ref{thm:Poisson.Dirichlet} is given in Section \ref{sec:proof.min.result}.
        As mentioned earlier, it is a consequence of Theorem \ref{thm:limiting.two.overlap.distribution}, Theorem \ref{thm:extended.GG.identities} and the ultrametric structure of the overlaps under the limiting mean Gibbs measure.
        To prove the extended Ghirlanda-Guerra identities in Section \ref{sec:proof.GG}, we will use the strategy developed in \cite{MR2070334,MR2070335} and used in \cite{MR3211001} and \cite{arXiv:1706.08462} (see Remark \ref{rem:beta.cases.explanation}).
        For an alternative strategy (which requires a stronger control on the path of the maximal particle in the tree structure), see \cite{MR3539644}.
    \end{remark}

    \begin{remark}\label{rem:beta.cases.explanation}
        In this paper, we state most of our results above the critical inverse temperature (i.e.\hspace{-0.3mm} at low temperature), namely when $\beta > \beta_c \circeq 2$, because that's the only interesting case. The description of the joint law of the overlaps under the limiting mean Gibbs measure turns out to be trivial when $\beta < \beta_c$.
        Here's why.

        When $\beta > \beta_c$, the Gibbs measure gives a lot of weight to the ``particles'' $h$ that are near the maximum's height in the tree structure underlying the model \eqref{def:X}. The result of Theorem \ref{thm:limiting.two.overlap.distribution} simply says that if you sample two particles under the Gibbs measure, then, in the limit and on average, either the particles branched off ``at the last moment'' in the tree structure (there are clusters of points reaching near the level of the maximum) or they branched off in the beginning. They cannot branch at intermediate scales.

        When $\beta < \beta_c$, the weights in the Gibbs measure are more spread out so that most contributions to the free energy actually come from particles reaching heights that are well below the level of the maximum in the tree structure. Hence, when two particles are selected from this larger pool of contributors that are not clustering, it can be shown that, in the limit and on average, the particles necessarily branched off in the beginning of the tree.
        The proof would follow the exact same strategy used in \cite{arXiv:1706.08462} :
        \begin{itemize}
            \item find the free energy of the perturbed model as a function of the perturbation parameter $u$,
            \item link the expectation of the derivative of the perturbed free energy at $u = 0$ with the two-overlap distribution by using an approximate integration by parts argument and the convexity of the free energy.
        \end{itemize}
        (We refer to this strategy as the {\it Bovier-Kurkova technique} since it is adapted from the strategy introduced in \cite{MR2070334,MR2070335} for the GREM model.)
        The computations would actually be easier in this case. One would find that
        \begin{equation}\label{eq:limiting.gibbs.measure.under.2}
            \lim_{T\to \infty} \EE G_{\beta,T}^{\times 2} \big[\bb{1}_{\{\rho(h,h') \in A\}}\big] = \bb{1}_A(0).
        \end{equation}
        In other words, when $\beta < \beta_c$, the limiting mean Gibbs measure only samples points that are uncorrelated (and thus far from each other) in the limiting tree structure.
        More generally, our main result (Theorem \ref{thm:Poisson.Dirichlet}), which describes the joint law of the overlaps under the limiting mean Gibbs measure, would say that for any continuous function $\phi : [0,1]^{s^2} \to \R$ of the overlaps of $s$ points,
        \begin{equation}\label{eq:limiting.joint.law.overlaps.under.2}
            \lim_{T\to \infty} \EE G_{\beta,T}^{\times s} \Big[\phi\Big(\big(\rho(h_l,h_{l'})\big)_{1 \leq l,l' \leq s}\Big)\Big] = \phi(I_s),
        \end{equation}
        where $I_s$ denotes the identity matrix of order $s$.
        In the critical case $\beta = \beta_c$, we obtain \eqref{eq:limiting.gibbs.measure.under.2} and \eqref{eq:limiting.joint.law.overlaps.under.2} with the same techniques.
    \end{remark}

    \section{Known results}\label{sec:known.results}

    In this section, we gather the results from \cite{arXiv:1706.08462} that we will use in Section \ref{sec:proof.GG} to prove the extended Ghirlanda-Guerra identities.
    The two propositions below are known convergence results for $f_{\alpha,\beta,T}$ and its derivative (with respect to $u$).
    We slightly reformulate them for later use.

    \begin{proposition}[Proposition 3 in \cite{arXiv:1706.08462}]\label{prop:mean.convergence.derivative.free.energy}
        Let $\beta > \beta_c \circeq 2$ and $0 < \alpha < 1$.
        Then,
        \begin{equation}\label{eq:prop.3.Arguin.Tai.2017}
            \frac{2}{\beta^2} \cdot \EE\big[f_{\alpha,\beta,T}'(0)\big] = \int_0^{\alpha} \EE G_{\beta,T}^{\times 2} [\bb{1}_{\{\rho(h,h') \leq y\}}] dy + o_T(1).
        \end{equation}
        Since $f_{\alpha,\beta,T}'(0) = \beta (\log \log T)^{-1} G_{\beta,T}[X_h(\alpha)]$, we can also write \eqref{eq:prop.3.Arguin.Tai.2017} as
        \begin{equation}\label{eq:prop.3.Arguin.Tai.2017.rewrite}
            \frac{1}{\beta} \cdot \frac{\EE G_{\beta,T}[X_h(\alpha)]}{\frac{1}{2} \log \log T} = \alpha - \EE G_{\beta,T}^{\times 2} [\int_0^{\alpha} \bb{1}_{\{y < \rho(h,h')\}} dy] + o_T(1).
        \end{equation}
    \end{proposition}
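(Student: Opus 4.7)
The plan is to differentiate the perturbed free energy directly, apply an approximate Gaussian integration-by-parts on the prime-indexed random variables, and read off the identity from the covariance structure produced by Mertens's theorem.

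First, differentiating under the integral sign yields
$$f_{\alpha,\beta,T}'(0) = \frac{\beta}{\log\log T}\, G_{\beta,T}[X_h(\alpha)],$$
so the claim reduces, via Fubini and the elementary identity $\int_0^{\alpha} \bb{1}_{\{\rho \leq y\}}\, dy = \alpha - \min(\alpha, \rho)$ for $\rho \in [0,1]$, to establishing
$$\frac{2}{\beta \log\log T}\, \EE G_{\beta,T}[X_h(\alpha)] = \alpha - \EE G_{\beta,T}^{\times 2}\bigl[\min\bigl(\alpha,\rho(h,h')\bigr)\bigr] + o_T(1).$$

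Next, I would implement an approximate Gaussian integration-by-parts. Writing $X_h(\alpha) = \sum_{p \leq \exp((\log T)^\alpha)} W_p(h)$ and conditioning on $(U_q)_{q \neq p}$ for each prime $p$, I would Taylor expand the Gibbs functional $F(X) = e^{\beta X_h}/\int e^{\beta X_{h''}}\, dh''$ to second order in the bounded pair $(\mathrm{Re}\, U_p, \mathrm{Im}\, U_p)/\sqrt{p}$. The leading term reproduces the Gaussian IBP identity
$$\EE[W_p(h) F] \;\approx\; \int \EE[W_p(h) W_p(h')]\, \EE[\partial_{W_p(h')} F]\, dh',$$
while the cubic remainder is controlled by $\EE[|W_p|^3] \cdot \|F''\|_\infty = O(p^{-3/2})$, where the uniform bound on second functional derivatives of $F$ follows from $|\partial_{W_p(h')} \log Z| \leq \beta$. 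Summing over $p \leq \exp((\log T)^\alpha)$ gives a total remainder of $O\bigl(\sum_p p^{-3/2}\bigr) = O(1) = o_T(\log\log T)$. Carrying out this procedure on $\EE G_{\beta,T}[X_h(\alpha)]$, the diagonal contribution $h' = h$ produces a variance term and the off-diagonal contribution produces a replica term:
$$\EE G_{\beta,T}[X_h(\alpha)] = \beta\,\EE[X_h(\alpha)^2] - \beta\, \EE G_{\beta,T}^{\times 2}\bigl[\EE[X_h(\alpha) X_{h'}]\bigr] + o_T(\log\log T).$$

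Finally, I would evaluate the two covariances using Mertens's theorem. The variance is $\EE[X_h(\alpha)^2] = \sum_{p \leq \exp((\log T)^\alpha)} 1/(2p) = \frac{\alpha}{2}\log\log T + O(1)$, while the cross-covariance
$$\EE[X_h(\alpha) X_{h'}] = \sum_{p \leq \exp((\log T)^\alpha)} \frac{\cos((h-h')\log p)}{2p} = \frac{\log\log T}{2} \cdot \min\bigl(\alpha,\rho(h,h')\bigr) + O(1)$$
follows from the sharp transition of the oscillatory sum at the cutoff $\log p \sim |h - h'|^{-1}$, which forces the cross-covariance to match the variance up to the overlap cap $\min(\alpha, \rho)$. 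Substituting and dividing through by $(\beta/2)\log\log T$ reproduces the reduced statement. The main obstacle is the IBP step: the highly nonlinear $1/Z$ dependence on $U_p$ must be tamed by a Taylor expansion whose second-order functional derivatives are bounded uniformly in $T$, and this is precisely where the boundedness $|W_p(h)| \leq p^{-1/2}$ and the Lipschitz estimate $|\partial_{W_p(h)} \log Z| \leq \beta$ are essential.
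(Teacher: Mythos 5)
Your proof is correct and takes essentially the same route the paper takes (via its own generalization): this proposition is cited from \cite{arXiv:1706.08462} without proof, but the paper's Lemma \ref{lem:approximate.integration.by.parts}, Lemma \ref{lem:bovier.kurkova.technique.p} and Proposition \ref{eq:bovier.kurkova.technique} reprove exactly this statement as the special case $(k=1, s=1, \phi\equiv 1)$ using the same three ingredients you use — prime-by-prime approximate integration by parts with cubic remainder $O(p^{-3/2})$ summing to $O(1)=o(\log\log T)$, the covariance estimate $\EE[X_h(\alpha)X_{h'}(\alpha)]=\tfrac{1}{2}\log\log T\cdot\min(\alpha,\rho(h,h'))+O(1)$ from Lemma \ref{lem:covariance.estimates}, and Mertens for the diagonal. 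The only cosmetic difference is that the paper carries out the Taylor expansion in the complex variables $(z_j,\overline{z_j})$ via $\omega_p(h)=\tfrac{1}{2}p^{-ih-1/2}$ and puts the $p^{-1/2}$ decay into the second-derivative bound rather than into the third moment of the normalized variable as you do.
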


    \begin{proposition}[Equation 13, Proposition 4 and Lemma 14 in \cite{arXiv:1706.08462}]\label{prop:convergence.free.energy}
        Let $\beta > \beta_c \circeq 2$, $0 \leq \alpha \leq 1$ and $u > -1$.
        Then,
        \begin{equation}\label{eq:prop.4.Arguin.Tai.2017}
            \lim_{T\to \infty} f_{\alpha,\beta,T}(u) = f_{\alpha,\beta}(u) \circeq \left\{\hspace{-1mm}
                \begin{array}{ll}
                    \frac{\beta^2}{4} V_{\alpha,u}, &\mbox{if } u < 0, ~2 < \beta \leq 2 / \sqrt{V_{\alpha,u}}, \\[0.5mm]
                    \beta \sqrt{V_{\alpha,u}} - 1, &\mbox{if } u < 0, ~\beta > 2 / \sqrt{V_{\alpha,u}}, \\[0.5mm]
                    \beta(\alpha u + 1) - 1, &\mbox{if } u \geq 0, ~\beta > 2,
                \end{array}
            \right.
        \end{equation}
        where the limit holds in $L^1$, and where $V_{\alpha,u} \circeq (1+u)^2\alpha + (1-\alpha)$.
    \end{proposition}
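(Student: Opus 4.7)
My plan is to reduce the convergence to (i) a computation of $\EE[f_{\alpha,\beta,T}(u)]$ in each of the three regimes, followed by (ii) a concentration argument that upgrades convergence in mean to convergence in $L^1$. The starting point is the independence decomposition
\begin{equation*}
u X_h(\alpha) + X_h \;=\; (1+u)\, X_h(\alpha) \;+\; \bigl(X_h - X_h(\alpha)\bigr),
\end{equation*}
valid because $X_h(\alpha)$ and $X_h - X_h(\alpha)$ are built from disjoint sets of primes and hence are independent. Mertens' theorem gives $\mathrm{Var}(X_h(\alpha)) = \tfrac{\alpha}{2}\log\log T + O(1)$ and $\mathrm{Var}(X_h - X_h(\alpha)) = \tfrac{1-\alpha}{2}\log\log T + O(1)$, so the perturbed field has variance $\tfrac{1}{2} V_{\alpha,u}\log\log T + O(1)$, which already identifies the correct scale.

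For the mean, I would split by case. In the \textbf{high-temperature-like regime} ($u<0$, $2<\beta\le 2/\sqrt{V_{\alpha,u}}$), a first-moment bound on $Z_T(u) \circeq \int_0^1 e^{\beta(u X_h(\alpha)+X_h)}\,dh$, using the exponential moments of the $W_p$ (or their Gaussian approximation in the range where $p^{-1/2}$ is small), gives an upper bound matching $\tfrac{\beta^2}{4}V_{\alpha,u}\log\log T$. The matching lower bound comes from a truncated second-moment/Paley–Zygmund argument on the event that the walk $h\mapsto X_h(\alpha')$ stays below a barrier for all intermediate scales $\alpha'\in(0,1)$, precisely as in the first-moment analysis of branching random walks. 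In the \textbf{intermediate low-temperature regime} ($u<0$, $\beta>2/\sqrt{V_{\alpha,u}}$), the perturbed field is still a single log-correlated field of covariance $\sim\tfrac{V_{\alpha,u}}{2}\log\max(|h-h'|^{-1},2)$, so the known extreme-value analysis for this model (first-order maximum at $\sqrt{V_{\alpha,u}}\log\log T$) together with the standard $-1$ volume correction in the frozen phase of a log-correlated field yields $\beta\sqrt{V_{\alpha,u}}-1$.

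The delicate case is $u\ge 0$, $\beta>2$, where the answer $\beta(\alpha u+1)-1$ is strictly larger than the naive scaled formula $\beta\sqrt{V_{\alpha,u}}-1$. The point is that, with $u\ge 0$, both independent pieces $(1+u)X_h(\alpha)$ and $X_h - X_h(\alpha)$ have nonnegative multipliers, and because the model has a hierarchical (tree-like) correlation structure in $h$, their near-maximizing sets along the tree can be aligned: there exist many $h$'s where both pieces are simultaneously near their respective individual maxima $(1+u)\alpha\log\log T$ and $(1-\alpha)\log\log T$. I would make this quantitative by a two-level GREM-style computation: at the first level truncate to the near-maxima of $X_h(\alpha)$ (which, by the first-order max result for log-correlated fields of variance $\tfrac{\alpha}{2}\log\log T$, form a cluster of nontrivial Gibbs mass), and at the second level apply the frozen-regime free energy of the independent increment $X_h - X_h(\alpha)$ conditioned on $h$ belonging to that cluster. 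Summing the two contributions produces $\beta[(1+u)\alpha+(1-\alpha)] - 1 = \beta(\alpha u+1)-1$.

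Finally, for the $L^1$ upgrade, I would use the Efron–Stein (or a bounded-differences) inequality applied to the independent variables $(U_p)_{p\le T}$, each of which perturbs $f_{\alpha,\beta,T}(u)$ by $O(p^{-1/2}/\log\log T)$; summing over $p\le T$ yields $\mathrm{Var}(f_{\alpha,\beta,T}(u)) = O((\log\log T)^{-1})$, giving convergence in $L^2$ and hence in $L^1$. The main obstacle is the coherence step in the $u\ge 0$ case: one must control the joint behaviour of the two independent levels on a very thin event (the near-maximum of the first level) and show that this event is fat enough to carry the second level's frozen free energy without loss. This is where the multiscale / barrier estimates of \cite{MR3619786} and the Bovier–Kurkova tree truncation philosophy become essential.
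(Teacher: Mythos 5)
This proposition is not proved in the paper; it is imported wholesale from \cite{arXiv:1706.08462} (its equation (13), Proposition 4, and Lemma 14), and the present article treats it as a black box. So there is no internal proof to compare with, and I can only judge your outline on its own terms.

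Your decomposition $uX_h(\alpha)+X_h = (1+u)X_h(\alpha)+(X_h-X_h(\alpha))$, the Mertens variance accounting giving $\tfrac12 V_{\alpha,u}\log\log T$, and the Efron--Stein $L^2$ upgrade are all sound, and the two-level GREM computation you describe for $u\ge 0$ does produce the right formula. But the sentence motivating that case has the phenomenon exactly backwards. You assert that $\beta(\alpha u+1)-1$ is strictly larger than $\beta\sqrt{V_{\alpha,u}}-1$; in fact $(\alpha u+1)^2 - V_{\alpha,u} = \alpha(\alpha-1)u^2 \le 0$, so for $u\neq 0$ and $\alpha\in(0,1)$ the GREM value is strictly \emph{smaller}. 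The hierarchical structure is a constraint, not an opportunity: when $u>0$ the variance profile is concave, the near-maximizers of $X(\alpha)$ live in only about $(\log T)^{\alpha}$ distinct branches, and each such branch can then draw on only the remaining $(1-\alpha)\log\log T$ of entropy, capping the first-order maximum at $(\alpha u+1)\log\log T$ rather than the $\sqrt{V_{\alpha,u}}\log\log T$ you would get by redistributing the same total variance uniformly across scales. Relatedly, for $u<0$ the perturbed field is \emph{not} ``a single log-correlated field'' with kernel $\sim\tfrac{V_{\alpha,u}}{2}\log|h-h'|^{-1}$; its covariance has two slopes, $(1+u)^2$ for overlaps below $\alpha$ and $1$ above, and it is only because that profile is convex (so its concave hull is the straight line) that the free energy collapses to the single-scale value $\beta\sqrt{V_{\alpha,u}}-1$. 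A proof along your lines would need to make the Bovier--Kurkova concavification argument explicit in both regimes; as written, the sketch inverts the direction of the effect in one case and silently assumes the conclusion of the concavification step in the other.
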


\section{Proof of the extended Ghirlanda-Guerra identities}\label{sec:proof.GG}

    This section is dedicated to the proof of the extended Ghirlanda-Guerra identities (Theorem \ref{thm:extended.GG.identities}).
    We adopt a ``bottom-up'' style of presentation, where Theorem \ref{thm:extended.GG.identities} is the end goal.
    Here is the structure of the proof :


    \vspace{2mm}
    \begin{figure}[H] 
        \centering
        \includegraphics[width=11cm]{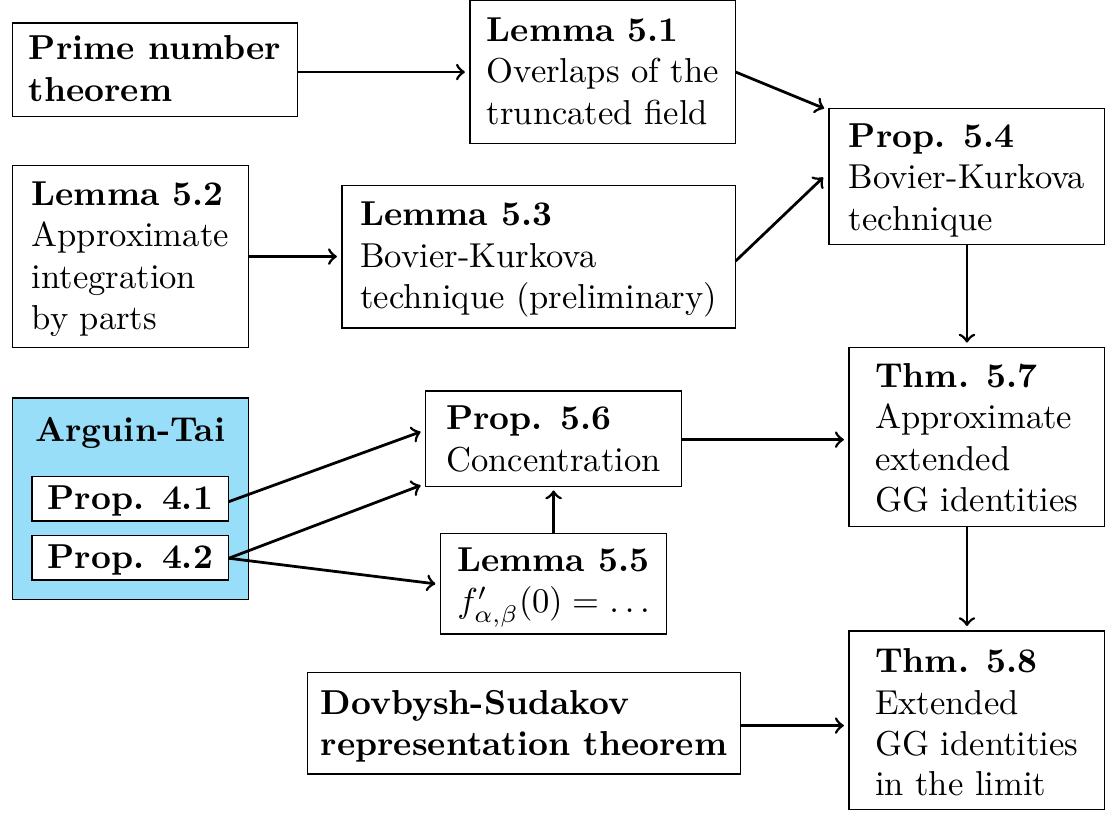}
        \caption{Structure of the proof}
        \label{fig:proof.structure}
    \end{figure}


    \newpage
    We start by relating the overlaps of the field $X$ to the overlaps of the truncated field $X(\alpha)$.

    \begin{lemma}[Overlaps of the truncated field]\label{lem:covariance.estimates}
        Let $0 \leq \alpha \leq 1$. Then, for all $h,h'\in [0,1]$,
        \begin{equation}
            \frac{\EE[X_h(\alpha) X_{h'}(\alpha)]}{\frac{1}{2} \log \log T}
            =
            \left\{\hspace{-1mm}
            \begin{array}{ll}
                \rho(h,h') + O\hspace{-0.5mm}\left((\log \log T)^{-1}\right), &\mbox{if } \rho(h,h') \leq \alpha, \\[1mm]
                \alpha + O\hspace{-0.5mm}\left((\log \log T)^{-1}\right), &\mbox{if } \rho(h,h') > \alpha.
            \end{array}
            \right.
        \end{equation}
        In both cases, the $O\hspace{-0.5mm}\left((\log \log T)^{-1}\right)$ term is uniform in $\alpha$.
    \end{lemma}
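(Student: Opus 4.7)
The plan is to write the covariance $\EE[X_h(\alpha) X_{h'}(\alpha)]$ as an explicit oscillatory prime sum, apply a uniform Mertens-type asymptotic, and read off the two cases from the location of the threshold $\rho(h,h') = \alpha$.

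First, I would exploit the independence and uniform distribution of the phases $U_p = e^{i\Theta_p}$: the cross-prime terms vanish, and a product-to-sum identity gives $\EE[W_p(h) W_q(h')] = \bb{1}_{\{p=q\}} \cdot \cos((h-h')\log p)/(2p)$. Setting $\tau \circeq h-h'$ and $N_\alpha \circeq \exp((\log T)^\alpha)$, this yields
\begin{equation*}
\EE[X_h(\alpha) X_{h'}(\alpha)] = \sum_{p \leq N_\alpha} \frac{\cos(\tau \log p)}{2p}.
\end{equation*}

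Next, I would invoke (or establish by partial summation from the prime number theorem) the uniform oscillatory Mertens estimate
\begin{equation*}
\sum_{p \leq N} \frac{\cos(\tau \log p)}{p} = \min\bigl(\log\log N,\; \log(1/|\tau|)\bigr) + O(1),
\end{equation*}
valid for $N \geq 2$ and $|\tau| \leq 1$, with an $O(1)$ uniform in both $N$ and $\tau$. The substitution $u = \log x$ reduces the left side, up to $O(1)$, to $\int_{\log 2}^{\log N} \cos(\tau u)/u\, du$; splitting at $u_0 = 1/|\tau|$ gives a non-oscillatory contribution of $\log(\log N \wedge u_0) + O(1)$ on the left piece and a tail $\int_{u_0}^{\log N} \cos(\tau u)/u\, du$ controlled by one integration by parts. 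Taking $\tau = 0$, $N = T$ also yields $\EE[X_h^2] = \tfrac{1}{2}\log\log T + O(1)$.

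Combining these two facts at $N = T$ gives the overlap identity
\begin{equation*}
\rho(h,h') = \frac{\min(\log\log T,\, \log(1/|\tau|))}{\log\log T} + O\bigl((\log\log T)^{-1}\bigr),
\end{equation*}
so $\rho(h,h') > \alpha$ is equivalent, up to an $O((\log\log T)^{-1})$ shift, to $\log(1/|\tau|) > \alpha\log\log T = \log\log N_\alpha$. In the regime $\rho(h,h') > \alpha$, the Mertens estimate at $N = N_\alpha$ gives $\min(\log\log N_\alpha, \log(1/|\tau|)) = \alpha \log\log T + O(1)$, which after dividing by $\tfrac{1}{2}\log\log T$ produces the second case. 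In the complementary regime $\rho(h,h') \leq \alpha$, the minimum equals $\log(1/|\tau|)$, which by the overlap identity is $\log\log T \cdot \rho(h,h') + O(1)$, giving the first case.

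The main obstacle is the uniform oscillatory Mertens estimate: the $O(1)$ error must be independent of both $\tau$ and $N$, since this is exactly what guarantees that the resulting $O((\log\log T)^{-1})$ relative error in the lemma is uniform in $\alpha \in [0,1]$. The delicate part is the oscillatory tail $\int_{u_0}^{\log N} \cos(\tau u)/u \, du$, where a single integration by parts must extract enough cancellation to produce an absolute constant; the matching of both cases at the threshold $\rho(h,h') = \alpha$ is automatic, since both formulas evaluate to $\alpha$ there.
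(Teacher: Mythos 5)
Your proposal is correct and follows essentially the same route as the paper: compute $\EE[W_p(h)W_p(h')]=\cos(|h-h'|\log p)/(2p)$ from the independence and uniformity of the $U_p$'s, sum over $p\le\exp((\log T)^\alpha)$, apply the oscillatory Mertens-type estimate $\sum_{p\le N}\cos(\tau\log p)/p=\min(\log\log N,\log|\tau|^{-1})+O(1)$, and translate the resulting threshold $|h-h'|^{-1}$ versus $(\log T)^\alpha$ into the threshold $\rho(h,h')$ versus $\alpha$, noting that the two branches agree to within $O((\log\log T)^{-1})$ at the boundary so the case split may be taken at $\rho(h,h')=\alpha$ exactly. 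The one difference is that the paper imports the uniform prime-sum estimate directly from Appendix~A of Harper's paper \cite{arXiv:1304.0677}, whereas you propose to rederive it by partial summation from the prime number theorem and a split of $\int\cos(\tau u)/u\,du$ at $u_0=1/|\tau|$; your sketch of that derivation (non-oscillatory head, single integration by parts on the tail, uniformity of the $O(1)$) is sound, so it merely makes the argument self-contained at the cost of some length.
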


    \begin{proof}
        Since $\text{Re}(z) = (z + \overline{z})/2$, $\EE[U_p^2] = \EE[(\overline{U_p})^2] = 0$ and $\EE[U_p \overline{U_p}] = 1$, it is easily shown from \eqref{def:X} that, for any prime $p$,
        \begin{equation}\label{eq:W.covariance}
            \EE[W_p(h) W_p(h')] = \frac{1}{2p} \cos(|h - h'| \log p), \quad h,h'\in [0,1].
        \end{equation}
        Thus, from the independence of the $U_p$'s and \eqref{def:X.alpha},
        \begin{equation}\label{eq:X.alpha.covariance}
            \EE[X_h(\alpha) X_{h'}(\alpha)] = \sum_{p \leq \exp((\log T)^{\alpha})} \frac{1}{2p} \cos(|h - h'| \log p), \quad h,h'\in [0,1].
        \end{equation}
        Sums like the one on the right-hand side of \eqref{eq:X.alpha.covariance} were estimated on page 20 of Appendix A in \cite{arXiv:1304.0677} by using the prime number theorem.
        In particular,
        \begin{equation}\label{eq:rho.estimate}
            \rho(h,h') = \frac{\frac{1}{2} \log\big((\log T) \wedge |h - h'|^{-1}\big)}{\frac{1}{2} \log \log T} + O\hspace{-0.5mm}\left((\log \log T)^{-1}\right),\vspace{-3mm}
        \end{equation}
        and
        \vspace{2mm}
        \begin{equation}\label{eq:rho.estimate.alpha}
            \frac{\EE[X_h(\alpha) X_{h'}(\alpha)]}{\frac{1}{2} \log \log T}
            =
            \left\{\hspace{-1mm}
            \begin{array}{ll}
                \frac{\log|h - h'|^{-1}}{\log \log T} + O\hspace{-0.5mm}\left((\log \log T)^{-1}\right), &\mbox{if } 1 \leq |h - h'|^{-1} < (\log T)^{\alpha}, \\[1.2mm]
                \alpha + O\hspace{-0.5mm}\left((\log \log T)^{-1}\right), &\mbox{if } |h - h'|^{-1} \geq (\log T)^{\alpha},
            \end{array}
            \right.
        \end{equation}
        where the $O\hspace{-0.5mm}\left((\log \log T)^{-1}\right)$ terms are all uniform in $\alpha$.
        By comparing \eqref{eq:rho.estimate} and \eqref{eq:rho.estimate.alpha}, we get
        \begin{align}
            \frac{\EE[X_h(\alpha) X_{h'}(\alpha)]}{\frac{1}{2} \log \log T}
            &= \left\{\hspace{-1mm}
            \begin{array}{ll}
                \rho(h,h') + O\hspace{-0.5mm}\left((\log \log T)^{-1}\right), &\mbox{if } \rho(h,h') - O\hspace{-0.5mm}\left((\log \log T)^{-1}\right) < \alpha, \\[1.2mm]
                \alpha + O\hspace{-0.5mm}\left((\log \log T)^{-1}\right), &\mbox{if } \rho(h,h') - O\hspace{-0.5mm}\left((\log \log T)^{-1}\right) \geq \alpha,
            \end{array}
            \right. \notag \\[2mm]
            &=
            \left\{\hspace{-1mm}
            \begin{array}{ll}
                \rho(h,h') + O\hspace{-0.5mm}\left((\log \log T)^{-1}\right), &\mbox{if } \rho(h,h') \leq \alpha, \\[1.2mm]
                \alpha + O\hspace{-0.5mm}\left((\log \log T)^{-1}\right), &\mbox{if } \rho(h,h') > \alpha.
            \end{array}
            \right.
        \end{align}
        This ends the proof.
    \end{proof}

    The next lemma is an approximate integration by parts result.
    It is a straightforward generalization of Lemma 9 in \cite{arXiv:1706.08462}.

    \begin{lemma}[Approximate integration by parts]\label{lem:approximate.integration.by.parts}
        Let $s\in \N^*$ and let $\bb{\xi} \circeq (\xi_1,\xi_2,\ldots,\xi_s)$ be a random vector taking values in $\C^s$, such that $\EE[|\xi_j|^3] < \infty$ and $\EE[\xi_j] = 0$ for all $j\in \{1,\ldots,s\}$, and such that $\EE[\xi_l \xi_j] = 0$ for all $l,j\in \{1,\ldots,s\}$.
        Let $F : \C^s \to \C$ be a twice continuously differentiable function such that, for some $M > 0$,
        \begin{equation*}
            \max_{1 \leq j \leq s} \left\{\|\partial_{z_j}^2 F\|_{\infty} \vee \|\partial_{\overline{z_j}}^2 F\|_{\infty}\right\} \leq M,
        \end{equation*}
        where $\|f\|_{\infty} \circeq \sup_{\bb{z}\in \C^s} |f(\bb{z},\bb{\overline{z}})|$.
        Then, for any $k\in \{1,\ldots,s\}$,
        \begin{align}
            &\Big|\EE[\xi_k F(\bb{\xi},\overline{\bb{\xi}})] - \sum_{j=1}^s \EE[\xi_k \overline{\xi_j}] ~ \EE[\partial_{\overline{z_j}} F(\bb{\xi},\overline{\bb{\xi}})]\Big| \ll s^2 M \max_{1 \leq j \leq s} \EE[|\xi_j|^3], \label{eq:lem:approximate.integration.by.parts.eq.1} \\
            &\Big|\EE[\overline{\xi_k} F(\bb{\xi},\overline{\bb{\xi}})] - \sum_{j=1}^s \EE[\overline{\xi_k} \xi_j] ~ \EE[\partial_{z_j} F(\bb{\xi},\overline{\bb{\xi}})]\Big| \ll s^2 M \max_{1 \leq j \leq s} \EE[|\xi_j|^3], \label{eq:lem:approximate.integration.by.parts.eq.2}
        \end{align}
        where $f(\cdot) \ll g(\cdot)$ means that $|f(\cdot)| \leq C g(\cdot)$ for some universal constant $C > 0$ (the Vinogradov notation).
    \end{lemma}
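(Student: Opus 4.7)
The plan is to establish both integration-by-parts bounds via an integral-remainder Taylor expansion of $F$ (and of its first Wirtinger derivatives) around $\bb{0}$, then to use the moment hypotheses $\EE[\xi_j] = 0$ and $\EE[\xi_l \xi_j] = 0$ to cancel the leading pieces, leaving only remainder terms that the $M$-bound on the second Wirtinger derivatives and AM-GM control by $\max_j\EE[|\xi_j|^3]$.

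Concretely, for \eqref{eq:lem:approximate.integration.by.parts.eq.1}, second-order Taylor expansion at $\bb{0}$ yields
\begin{equation*}
F(\bb{\xi},\overline{\bb{\xi}}) = F(\bb{0},\bb{0}) + \sum_{j=1}^s\bigl[\xi_j\,\partial_{z_j} F(\bb{0},\bb{0}) + \overline{\xi_j}\,\partial_{\overline{z_j}} F(\bb{0},\bb{0})\bigr] + R_F(\bb{\xi},\overline{\bb{\xi}}),
\end{equation*}
where $R_F$ is a sum of $O(s^2)$ terms, each of the form $\xi_j\xi_l(\cdot)$, $\overline{\xi_j}\,\overline{\xi_l}(\cdot)$, or $\xi_j\overline{\xi_l}(\cdot)$, with $(\cdot)$ a Wirtinger second derivative of $F$ on the segment $[\bb{0},\bb{\xi}]$. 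Multiplying by $\xi_k$ and taking expectation, the hypotheses $\EE[\xi_j] = 0$ and $\EE[\xi_k\xi_j]=0$ annihilate the constant and holomorphic linear terms, so that
\begin{equation*}
\EE[\xi_k F(\bb{\xi},\overline{\bb{\xi}})] = \sum_{j=1}^s \EE[\xi_k\overline{\xi_j}]\,\partial_{\overline{z_j}} F(\bb{0},\bb{0}) + \EE[\xi_k R_F].
\end{equation*}
A parallel first-order Taylor expansion of each $\partial_{\overline{z_j}} F$ at $\bb{0}$ gives $\EE[\partial_{\overline{z_j}}F(\bb{\xi},\overline{\bb{\xi}})] = \partial_{\overline{z_j}}F(\bb{0},\bb{0}) + \EE[R_j]$, and subtracting the two equations leaves precisely the remainders $\EE[\xi_k R_F] - \sum_j\EE[\xi_k\overline{\xi_j}]\EE[R_j]$, which must be shown to be $\ll s^2 M\max_j\EE[|\xi_j|^3]$.

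Each summand of $\xi_k R_F$ is pointwise dominated by $M|\xi_k\xi_j\xi_l|$; AM-GM $|abc|\le\frac{1}{3}(|a|^3+|b|^3+|c|^3)$ yields $\EE|\xi_k\xi_j\xi_l|\le\max_i\EE[|\xi_i|^3]$, and summing the $O(s^2)$ terms produces the desired bound on $|\EE[\xi_k R_F]|$. For the correction piece, $|\EE[\xi_k\overline{\xi_j}]|\le(\max_i\EE[|\xi_i|^3])^{2/3}$ by Cauchy--Schwarz and Jensen, while $|\EE[R_j]|\ll sM(\max_i\EE[|\xi_i|^3])^{1/3}$ since $R_j$ is a sum of $s$ terms each bounded by $M|\xi_l|$; their product, summed over $j$, contributes the stated $s^2 M\max_j\EE[|\xi_j|^3]$. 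Identity \eqref{eq:lem:approximate.integration.by.parts.eq.2} then follows by running the same argument with $\overline{\xi_k}$ in place of $\xi_k$ and the roles of $\partial_{z_j}$ and $\partial_{\overline{z_j}}$ swapped.

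The main technical hurdle is verifying that the bound on only the pure Wirtinger second derivatives $\partial_{z_j}^2 F$ and $\partial_{\overline{z_j}}^2 F$ suffices to control every second-order term in the integral-remainder form of Taylor's theorem; this requires carefully unfolding the Wirtinger calculus and leveraging the moment conditions $\EE[\xi_l\xi_j]=0$ to cancel (or absorb) the contributions of the truly mixed derivatives $\partial_{z_j}\partial_{\overline{z_l}}F$ that the naive expansion produces. This is precisely the bookkeeping carried out in the single-variable Lemma 9 of \cite{arXiv:1706.08462}, and the present multivariate statement follows by repeating that argument for each pair $(j,l)\in\{1,\ldots,s\}^2$, which explains the $s^2$ factor in the error term.
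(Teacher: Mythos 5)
Your proposal follows essentially the same route as the paper's own proof: rewrite $\EE[\xi_k F]$ using $\EE[\xi_k]=0$ and $\EE[\xi_k\xi_j]=0$ to subtract the holomorphic linear part of the Taylor polynomial at $\bb{0}$ for free, then bound the second-order Taylor remainder of $F$ and the first-order Taylor remainder of each $\partial_{\overline{z_j}}F$ by $M$ times quadratic (resp.\ linear) combinations of the $|\xi_l|$, and finally combine third-moment inequalities (the paper uses H\"older where you use AM--GM and Cauchy--Schwarz/Jensen, but these are interchangeable here). The decomposition, the cancellation, the $O(s^2)$ count of remainder terms, and the final reduction to $\max_j\EE[|\xi_j|^3]$ are all the same.

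Your final paragraph raises a legitimate point of caution: as written, the lemma's hypothesis bounds only the pure derivatives $\partial_{z_j}^2 F$ and $\partial_{\overline{z_j}}^2 F$, yet the multivariate second-order Taylor remainder also involves the mixed derivatives $\partial_{z_j}\partial_{z_l}F$, $\partial_{z_j}\partial_{\overline{z_l}}F$ for $j\neq l$, which the stated hypothesis does not directly control. The paper's own proof invokes ``Taylor's theorem in several variables and the assumptions'' to obtain the bound $\ll M(\sum_l|\xi_l|)^2$ without addressing this, so the paper shares the same imprecision. In the only place the lemma is applied (Lemma \ref{lem:bovier.kurkova.technique.p}, with $\bb{\xi}=\bb{U}_p$ having all coordinates equal and $F=F_p$ whose derivatives all carry the same $p^{-1/2}$ scaling), the mixed derivatives are bounded by the same order as the pure ones, so nothing goes wrong in practice; but strictly speaking the hypothesis of the lemma should read as a bound on all second-order Wirtinger derivatives. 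You were right to flag this; just be aware it is a defect of the statement rather than a gap you need to repair in the proof.
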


    \begin{proof}
        Fix $k\in \{1,\ldots,s\}$.
        We only prove \eqref{eq:lem:approximate.integration.by.parts.eq.1} because the proof of \eqref{eq:lem:approximate.integration.by.parts.eq.2} is almost identical.
        Since $\EE[\xi_k] = 0$ and $\EE[\xi_k \xi_j] = 0$ for all $j\in \{1,\ldots,s\}$, the left-hand side of \eqref{eq:lem:approximate.integration.by.parts.eq.1} can be written as
        \begin{equation}\label{eq:lem:approximate.integration.by.parts.first}
            \begin{aligned}
            &\EE\Big[\xi_k \Big(F(\bb{\xi},\overline{\bb{\xi}}) - F(\bb{0},\bb{0}) - \sum_{j=1}^s \xi_j \partial_{z_j} F(\bb{0},\bb{0}) - \sum_{j=1}^s \overline{\xi_j} \partial_{\overline{z_j}} F(\bb{0},\bb{0})\Big)\Big] \\
            &\hspace{50mm}- \sum_{j=1}^s \EE\Big[\xi_k \overline{\xi_j}\Big] \EE \Big[\partial_{\overline{z_j}} F(\bb{\xi},\overline{\bb{\xi}}) - \partial_{\overline{z_j}} F(\bb{0},\bb{0})\Big].
            \end{aligned}
        \end{equation}
        By Taylor's theorem in several variables and the assumptions, the following estimates hold
        \begin{align}
            &\Big|F(\bb{\xi},\overline{\bb{\xi}}) - F(\bb{0},\bb{0}) - \sum_{j=1}^s \xi_j \partial_{z_j} F(\bb{0},\bb{0}) - \sum_{j=1}^s \overline{\xi_j} \partial_{\overline{z_j}} F(\bb{0},\bb{0})\Big| \notag \\
            &\hspace{50mm}\ll M \left(\sum_{l=1}^s |\xi_l|\right)^2 \leq M \, s \sum_{l=1}^s |\xi_l|^2, \\
            &\Big|\partial_{\overline{z_j}} F(\bb{\xi},\overline{\bb{\xi}}) - \partial_{\overline{z_j}} F(\bb{0},\bb{0})\Big| \ll M \sum_{l=1}^s |\xi_l| \quad \text{for all } j\in \{1,\ldots,s\}.
        \end{align}
        Therefore,
        \vspace{-2mm}
        \begin{align}
            |\eqref{eq:lem:approximate.integration.by.parts.first}|
            &\ll M \sum_{l=1}^s \Big(s \, \EE\big[|\xi_k| \cdot |\xi_l|^2\big] + \sum_{j=1}^s \EE\big[|\xi_k| \cdot |\xi_j|\big] \EE\big[|\xi_l|\big]\Big) \notag \\
            &\leq M \sum_{l=1}^s \Big(s \, \EE\big[|\xi_k|^3\big]^{1/3} \EE\big[(|\xi_l|^2)^{3/2}\big]^{2/3} + \sum_{j=1}^s \EE\big[|\xi_k|^3\big]^{1/3} \EE\big[|\xi_j|^3\big]^{1/3} \EE\big[|\xi_l|^3\big]^{1/3}\Big) \notag \\
            &\leq 2 s^2 M \max_{1 \leq j \leq s} \EE[|\xi_j|^3],
        \end{align}
        where we used Holder's inequality to obtain the second inequality.
    \end{proof}

    Here is a generalization of Proposition 10 in \cite{arXiv:1706.08462}.
    It could be seen as a generalization of \eqref{eq:prop.3.Arguin.Tai.2017.rewrite} if \eqref{eq:prop.3.Arguin.Tai.2017.rewrite} was applied to $(W_p(h), h\in [0,1])$ instead of $(X_h(\alpha), h\in [0,1])$.

    \begin{lemma}[Bovier-Kurkova technique - preliminary version]\label{lem:bovier.kurkova.technique.p}
        Let $\beta > 0$ and $p \leq T$.
        For any $s\in \N^*$, any $k\in \{1,\ldots,s\}$, and any bounded mesurable function $\phi : [0,1]^s \rightarrow \R$, we have
        \begin{equation}\label{eq:prop:bovier.kurkova.technique.p.eq}
            \begin{aligned}
            &\left|\EE G_{\beta,T}^{\times s}[W_p(h_k) \phi(\bb{h})] \right. \\
                &\hspace{10mm}- \left. \beta \cdot \left\{\hspace{-1.5mm}
                    \begin{array}{l}
                        \sum_{l=1}^s \EE G_{\beta,T}^{\times s} \big[\EE[W_p(h_k) W_p(h_l)] \, \phi(\bb{h})\big] \\[1mm]
                        - s \, \EE G_{\beta,T}^{\times (s+1)} \big[\EE[W_p(h_k) W_p(h_{s+1})] \, \phi(\bb{h})\big]
                    \end{array}
                    \hspace{-1.5mm}\right\}
            \right|
            \leq K p^{-3/2},
            \end{aligned}
        \end{equation}
        where $\bb{h} \circeq (h_1,h_2,\ldots,h_s)$, $K \circeq s^2 C \beta^2 \|\phi\|_{\infty}$, and $C > 0$ is a universal constant.
    \end{lemma}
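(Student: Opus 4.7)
The plan is to apply the approximate integration by parts formula (Lemma \ref{lem:approximate.integration.by.parts}) in dimension one to the single complex variable $\zeta \circeq U_p / p^{1/2}$, conditionally on $\{U_q : q \neq p\}$. Note that $\EE[\zeta] = \EE[\zeta^2] = 0$, $\EE[\zeta \overline{\zeta}] = 1/p$, and $\EE[|\zeta|^3] = p^{-3/2}$. Decomposing $W_p(h_k) = \tfrac{1}{2}(\zeta\, p^{-ih_k} + \overline{\zeta}\, p^{ih_k})$, the quantity $G_{\beta,T}^{\times s}[W_p(h_k) \phi(\bb{h})]$, viewed given $\{U_q : q \neq p\}$ as a function of $(\zeta, \overline{\zeta})$, reads $\tfrac{1}{2} \zeta\, F_1(\zeta, \overline{\zeta}) + \tfrac{1}{2} \overline{\zeta}\, F_2(\zeta, \overline{\zeta})$, where $F_1$ and $F_2$ denote the $s$-fold averages of $p^{-ih_k} \phi(\bb{h})$ and $p^{ih_k} \phi(\bb{h})$, respectively, under the Gibbs measure built from the field $X_h^\star + W_p(h; z, \overline{z})$, with $W_p(h; z, \overline{z}) \circeq \tfrac{1}{2}(z\, p^{-ih} + \overline{z}\, p^{ih})$ and $X_h^\star \circeq X_h - W_p(h)$ (independent of $U_p$).

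Applying \eqref{eq:lem:approximate.integration.by.parts.eq.1} to $\EE_{U_p}[\zeta F_1]$ and \eqref{eq:lem:approximate.integration.by.parts.eq.2} to $\EE_{U_p}[\overline{\zeta} F_2]$ and summing yields
\begin{equation*}
    \EE_{U_p}\bigl[G_{\beta,T}^{\times s}[W_p(h_k) \phi(\bb{h})]\bigr] = \tfrac{1}{2p}\, \EE_{U_p}\bigl[\partial_{\overline{z}} F_1 + \partial_z F_2\bigr] + O(M\, p^{-3/2}).
\end{equation*}
The log-derivative identity $\partial_{\overline{z}} \log \mu_z(h) = \tfrac{\beta}{2}(p^{ih} - \langle p^{ih'} \rangle_z)$ for the modified Gibbs density (and its conjugate for $\partial_z$), combined with the product rule and a standard replica reordering, gives
\begin{align*}
    \partial_{\overline{z}} F_1 &= \tfrac{\beta}{2} \sum_{l=1}^s G_{\beta,T}^{\times s}\bigl[p^{-i(h_k - h_l)} \phi(\bb{h})\bigr] - \tfrac{s \beta}{2}\, G_{\beta,T}^{\times (s+1)}\bigl[p^{-i(h_k - h_{s+1})} \phi(\bb{h})\bigr], \\
    \partial_z F_2 &= \tfrac{\beta}{2} \sum_{l=1}^s G_{\beta,T}^{\times s}\bigl[p^{i(h_k - h_l)} \phi(\bb{h})\bigr] - \tfrac{s \beta}{2}\, G_{\beta,T}^{\times (s+1)}\bigl[p^{i(h_k - h_{s+1})} \phi(\bb{h})\bigr].
\end{align*}
Summing the two lines, using $p^{-ix} + p^{ix} = 2\cos(x \log p)$, and recognizing $\EE[W_p(h_k) W_p(h_l)] = \cos((h_k - h_l) \log p)/(2p)$ via \eqref{eq:W.covariance}, the prefactor $1/(2p)$ reproduces exactly the main term in \eqref{eq:prop:bovier.kurkova.technique.p.eq} once the outer expectation over $\{U_q : q \neq p\}$ is taken.

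It remains to control $M$. The Taylor argument behind Lemma \ref{lem:approximate.integration.by.parts} only requires second-derivative estimates along the segment from $\bb{0}$ to $\zeta$, which lies in $\{|z| \leq p^{-1/2}\}$; on that compact set $|e^{\beta W_p(h; z, \overline{z})}| \leq e^{\beta}$, so $F_1$, $F_2$ and their derivatives stay finite. Iterating the log-derivative identity to compute $\partial_{\overline{z}}^2 F_1$ and $\partial_z^2 F_2$ produces at most $O(s^2)$ terms, each a Gibbs average (in $s$, $s+1$ or $s+2$ replicas) of $\phi(\bb{h})$ multiplied by unimodular complex exponentials and carrying a factor of order $\beta^2$; hence $M \leq C s^2 \beta^2 \|\phi\|_\infty$ for a universal $C$, yielding the stated error $K p^{-3/2}$ with $K = s^2 C \beta^2 \|\phi\|_\infty$. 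The main obstacle is the bookkeeping here: tracking the replica reordering in the first derivatives to extract precisely the $-s\beta/2$ coefficient of the $(s+1)$-replica term, and propagating the same structure to the second derivatives cleanly enough to deliver the sharp $s^2$ prefactor in $M$.
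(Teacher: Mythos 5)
Your proof is correct, and the core mechanism is the same as the paper's: kill the linear $U_p$-dependence by the approximate integration-by-parts Lemma, compute the resulting first derivatives to produce the $s$-replica and $(s+1)$-replica covariance terms, and bound the second derivatives to control the remainder. What differs is the bookkeeping. The paper introduces $s$ formal variables $z_1,\dots,z_s$, one per replica, builds $F_p$ with the prefactor $\omega_p(h_k)=\tfrac12 p^{-ih_k-1/2}$, and applies the $s$-dimensional Lemma to the repeated vector $\bb{U}_p=(U_p,\dots,U_p)$; there $\EE[|U_p|^3]=1$, the $p^{-3/2}$ lives entirely in $M$ (three factors of $|\omega_p|=\tfrac12 p^{-1/2}$), and the $s^2$ comes from the Lemma's error bound. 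You rescale the single random variable $\zeta=U_p/p^{1/2}$, apply the $1$-dimensional Lemma, and park the $p^{-3/2}$ in $\EE[|\zeta|^3]$; the $s^2$ must then surface in $M$, because your single $z$ couples to all $s$ replicas at once, so each differentiation of a Gibbs bracket yields $O(s)$ terms. Both distributions of the factors give $K=Cs^2\beta^2\|\phi\|_\infty$, and your first-derivative identities are exactly \eqref{eq:first.derivative.H}--\eqref{eq:prop:bovier.kurkova.technique.p.sum.expectations.next.2} after imposing $z_1=\cdots=z_s$. One small simplification: you need not restrict the second-derivative estimate to the disk $\{|z|\le p^{-1/2}\}$. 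Each derivative of $F_1,F_2$ is a Gibbs average of quantities of modulus at most $\|\phi\|_\infty$ (ratios of integrals with a common positive weight), so the bound $M\lesssim s^2\beta^2\|\phi\|_\infty$ holds globally in $z$, which is what the Lemma's $\|\cdot\|_\infty$ hypothesis requires.
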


    \begin{proof}
        Write for short
        \begin{equation}\label{eq:def.Omega.p}
            \omega_p(h) \circeq \frac{1}{2} p^{-i h - 1/2} \quad \text{and} \quad Y_p(h) \circeq \beta \sum_{\substack{q \leq T \\ q \neq p}} W_q(h).
        \end{equation}
        \vspace{-2mm}
        \hspace{-2.3mm}
        Define
        \begin{equation}
            F_p(\bb{z},\overline{\bb{z}}) \circeq \frac{\int_{[0,1]^s} \omega_p(h_k) \phi(\bb{h}) \prod_{l=1}^s \exp\Big(\beta (z_l \omega_p(h_l) + \overline{z_l} \overline{\omega_p(h_l)}) + Y_p(h_l)\Big) d \bb{h}}{\int_{[0,1]^s} \prod_{l=1}^s \exp\Big(\beta (z_l \omega_p(h_l) + \overline{z_l} \overline{\omega_p(h_l)}) + Y_p(h_l)\Big) d \bb{h}}.
        \end{equation}
        Then,
        \begin{equation}\label{eq:prop:bovier.kurkova.technique.p.sum.expectations}
            \EE G_{\beta,T}^{\times s}[W_p(h_k) \phi(\bb{h})] = \EE[U_p \cdot F_p(\bb{U}_p,\overline{\bb{U}_p})] + \EE[\overline{U_p} \cdot \overline{F_p}(\bb{U}_p,\overline{\bb{U}_p})],
            \vspace{2mm}
        \end{equation}
        where $\bb{U}_p \circeq (U_p,U_p,\ldots,U_p)$.
        Since the $U_p$'s are i.i.d.\hspace{-0.3mm} uniform random variables on the unit circle in $\C$, we have $\EE[|U_p|^3] < \infty$, $\EE[U_p \overline{U_p}] = 1$ and $\EE[U_p^2] = \EE[U_p] = 0$.
        If we apply \eqref{eq:lem:approximate.integration.by.parts.eq.1} with $F = F_p$ and $\bb{\xi} = \bb{U}_p$, and \eqref{eq:lem:approximate.integration.by.parts.eq.2} with $F = \overline{F_p}$ and $\bb{\xi} = \bb{U}_p$, we get, as $T\to\infty$,
        \begin{equation}\label{eq:prop:bovier.kurkova.technique.p.sum.expectations.next}
            \begin{aligned}
                \EE G_{\beta,T}^{\times s}[W_p(h_k) \phi(\bb{h})]
                &= \sum_{j=1}^s \Big\{\EE\big[\partial_{\overline{z_j}} F_p(\bb{U}_p,\overline{\bb{U}_p})\big] + \EE\big[\partial_{z_j} \overline{F_p}(\bb{U}_p,\overline{\bb{U}_p})\big]\Big\} \\
                &+ s^2 \, O\Big(\max_{1 \leq j \leq s} \big\{\|\partial_{z_j}^2 F_p\|_{\infty} \vee \|\partial_{\overline{z_j}}^2 F_p\|_{\infty}\big\}\Big).
            \end{aligned}
        \end{equation}
        For any bounded mesurable function $H : [0,1] \to \C$, define
        \begin{equation}
            \langle H \rangle_{(z,\overline{z})} \circeq \langle H(h) \rangle_{(z,\overline{z})} \circeq \frac{\int_{[0,1]} H(h) \exp\Big(\beta (z \omega_p(h) + \overline{z} \overline{\omega_p(h)}) + Y_p(h)\Big) d h}{\int_{[0,1]} \exp\Big(\beta (z \omega_p(h) + \overline{z} \overline{\omega_p(h)}) + Y_p(h)\Big) d h},
        \end{equation}
        and for any bounded mesurable function $H : [0,1]^s \to \C$, define
        \begin{equation}
            \langle H \rangle_{(\bb{z},\overline{\bb{z}})}^{\phi} \circeq \langle H(\bb{h}) \rangle_{(\bb{z},\overline{\bb{z}})}^{\phi} \circeq \frac{\int_{[0,1]^s} H(\bb{h}) \phi(\bb{h}) \prod_{l=1}^s \exp\Big(\beta (z_l \omega_p(h_l) + \overline{z_l} \overline{\omega_p(h_l)}) + Y_p(h_l)\Big) d \bb{h}}{\int_{[0,1]^s} \prod_{l=1}^s \exp\Big(\beta (z_l \omega_p(h_l) + \overline{z_l} \overline{\omega_p(h_l)}) + Y_p(h_l)\Big) d \bb{h}}.
        \end{equation}
        Differentiation of the above yields
        \begin{equation}\label{eq:first.derivative.H}
            \begin{aligned}
                &\partial_{\overline{z_j}} \langle H \rangle_{(\bb{z},\overline{\bb{z}})}^{\phi} = \beta \big\{\langle H \overline{\omega(h_j)}\rangle_{(\bb{z},\overline{\bb{z}})}^{\phi} - \langle H \rangle_{(\bb{z},\overline{\bb{z}})}^{\phi} \langle \overline{\omega_p(h_{s+1})} \rangle_{(z_j,\overline{z_j})}\big\}, \\
                &\partial_{z_j} \langle H \rangle_{(\bb{z},\overline{\bb{z}})}^{\phi} = \beta \big\{\langle H \omega(h_j)\rangle_{(\bb{z},\overline{\bb{z}})}^{\phi} - \langle H \rangle_{(\bb{z},\overline{\bb{z}})}^{\phi} \langle \omega_p(h_{s+1}) \rangle_{(z_j,\overline{z_j})}\big\}.
            \end{aligned}
        \end{equation}
        The partial derivatives in \eqref{eq:first.derivative.H} can be used to expand the summands on the right-hand side of \eqref{eq:prop:bovier.kurkova.technique.p.sum.expectations.next}. Indeed, by using the relation $F_p(\bb{z},\overline{\bb{z}}) = \langle \omega_p(h_k) \rangle_{(\bb{z},\overline{\bb{z}})}^{\phi}$ with $\bb{z} = \bb{U}_p$,
        \begin{align}\label{eq:prop:bovier.kurkova.technique.p.sum.expectations.next.2}
            &\EE\big[\partial_{\overline{z_j}} F_p(\bb{U}_p,\overline{\bb{U}_p})\big] + \EE\big[\partial_{z_j} \overline{F_p}(\bb{U}_p,\overline{\bb{U}_p})\big] \notag \\[1mm]
            &\hspace{5mm}\stackrel{\phantom{\eqref{eq:first.derivative.H}}}{=} \EE\Big[\partial_{\overline{z_j}} \langle \omega_p(h_k) \rangle_{(\bb{U}_p,\overline{\bb{U}_p})}^{\phi}\Big] + \EE\Big[\partial_{z_j} \big\langle \overline{\omega_p(h_k)} \big\rangle_{(\bb{U}_p,\overline{\bb{U}_p})}^{\phi}\Big] \notag \\[1mm]
            &\hspace{5mm}\stackrel{\eqref{eq:first.derivative.H}}{=} \beta \, \EE\Big[\big\langle \omega_p(h_k) \overline{\omega_p(h_j)}\big\rangle_{(\bb{U}_p,\overline{\bb{U}_p})}^{\phi} - \langle \omega_p(h_k) \rangle_{(\bb{U}_p,\overline{\bb{U}_p})}^{\phi} \langle \overline{\omega_p(h_{s+1})} \rangle_{(U_p,\overline{U_p})}\Big] \notag \\[1mm]
            &\hspace{5mm}\quad \, \, + \beta \, \EE\Big[\big\langle \overline{\omega_p(h_k) \overline{\omega_p(h_j)}}\big\rangle_{(\bb{U}_p,\overline{\bb{U}_p})}^{\phi} - \langle \overline{\omega_p(h_k) \rangle_{(\bb{U}_p,\overline{\bb{U}_p})}^{\phi} \langle \overline{\omega_p(h_{s+1})}} \rangle_{(U_p,\overline{U_p})}\Big] \notag \\[1mm]
            &\hspace{5mm}\stackrel{\phantom{\eqref{eq:first.derivative.H}}}{=} \beta \cdot \left\{\hspace{-1mm}
                \begin{array}{l}
                    \EE\Big[\big\langle 2 \text{Re}\big(\omega_p(h_k) \overline{\omega_p(h_j)}\big)\big\rangle_{(\bb{U}_p,\overline{\bb{U}_p})}^{\phi}\Big] \\[3mm]
                    - \EE\Big[2 \text{Re}\Big(\langle \omega_p(h_k) \rangle_{(\bb{U}_p,\overline{\bb{U}_p})}^{\phi} \langle \overline{\omega_p(h_{s+1})} \rangle_{(U_p,\overline{U_p})}\Big)\Big]
                \end{array}
                \hspace{-1mm}\right\}.
        \end{align}
        Since, by definition,
        \begin{equation}\label{eq:bracket.U.p.equals.Gibbs}
            \langle \, \cdot \, \rangle_{(\bb{U}_p,\overline{\bb{U}_p})}^{\phi} = G_{\beta,T}^{\times s}[~\cdot~ \phi(\bb{h})],
        \end{equation}
        and
        \begin{align}\label{eq:lem:bovier.kurkova.technique.p.eq.technical.part}
            &2 \text{Re}\Big(\langle \omega_p(h_k) \rangle_{(\bb{U}_p,\overline{\bb{U}_p})}^{\phi} \langle \overline{\omega_p(h_{s+1})} \rangle_{(U_p,\overline{U_p})}\Big) \notag \\
            &\quad= 2 \text{Re}\left(\frac{\int_{[0,1]} \int_{[0,1]^s} \omega_p(h_k) \overline{\omega_p(h_{s+1})} \phi(\bb{h}) \prod_{l=1}^{s+1} \exp\Big(\beta \sum_{p \leq T} W_p(h_l)\Big) d \bb{h} \, d h_{s+1}}{\int_{[0,1]} \int_{[0,1]^s} \prod_{l=1}^{s+1} \exp\Big(\beta \sum_{p \leq T} W_p(h_l)\Big) d \bb{h} \, d h_{s+1}}\right) \notag \\[1mm]
            &\quad= G_{\beta,T}^{\times (s+1)}\big[2 \text{Re}\big(\omega_p(h_k) \overline{\omega_p(h_{s+1})}\big) \phi(\bb{h})\big],
        \end{align}
        and
        \begin{equation}
            2 \, \text{Re} (\omega_p(h) \overline{\omega_p(h')}) = \frac{1}{2p} \cos(|h - h'| \log p) \stackrel{\eqref{eq:W.covariance}}{=} \EE[W_p(h)W_p(h')],
        \end{equation}
        we can rewrite \eqref{eq:prop:bovier.kurkova.technique.p.sum.expectations.next.2} as
        \begin{equation}\label{eq:prop:bovier.kurkova.technique.p.sum.expectations.next.3}
            \begin{aligned}
            &\EE\big[\partial_{\overline{z_j}} F_p(\bb{U}_p,\overline{\bb{U}_p})\big] + \EE\big[\partial_{z_j} \overline{F_p}(\bb{U}_p,\overline{\bb{U}_p})\big] \\[1mm]
            &\hspace{5mm}= \beta \cdot \left\{\hspace{-1mm}
                \begin{array}{l}
                    \EE G_{\beta,T}^{\times s}\big[\EE[W_p(h_k)W_p(h_j)] \phi(\bb{h})\big] \\[1mm]
                    - \EE G_{\beta,T}^{\times (s+1)}\big[\EE[W_p(h_k)W_p(h_{s+1})] \phi(\bb{h})\big]
                \end{array}
                \hspace{-1mm}\right\}.
            \end{aligned}
        \end{equation}
        From \eqref{eq:prop:bovier.kurkova.technique.p.sum.expectations.next} and \eqref{eq:prop:bovier.kurkova.technique.p.sum.expectations.next.3}, we conclude \eqref{eq:prop:bovier.kurkova.technique.p.eq}, as long as, for all $j\in \{1,\ldots,s\}$,
        \begin{equation}
            \|\partial_{z_j}^2 F\|_{\infty} \vee \|\partial_{\overline{z_j}}^2 F\|_{\infty} \leq \widetilde{C} \beta^2 \|\phi\|_{\infty} p^{-3/2},
        \end{equation}
        where $\widetilde{C} > 0$ is a universal constant.
        To verify this last point, note that, by differentiating in \eqref{eq:first.derivative.H},
        \begin{align}\label{eq:second.derivative.z.bound}
            &\partial_{z_j}^2 \langle H \rangle_{(\bb{z},\overline{\bb{z}})}^{\phi}
            = \beta \left\{\hspace{-1mm}
                \begin{array}{l}
                    \partial_{z_j} \langle H \omega(h_j)\rangle_{(\bb{z},\overline{\bb{z}})}^{\phi} - (\partial_{z_j} \langle H \rangle_{(\bb{z},\overline{\bb{z}})}^{\phi}) \langle \omega_p(h_{s+1}) \rangle_{(z_j,\overline{z_j})} \\[1mm]
                    - \langle H \rangle_{(\bb{z},\overline{\bb{z}})}^{\phi} (\partial_{z_j} \langle \omega_p(h_{s+1}) \rangle_{(z_j,\overline{z_j})})
                \end{array}
                \hspace{-1mm}\right\} \notag \\[1mm]
            &\hspace{10mm}= \beta^2 \left\{\hspace{-1mm}
                \begin{array}{l}
                    \langle H \omega^2(h_j)\rangle_{(\bb{z},\overline{\bb{z}})}^{\phi} - \langle H \omega(h_j) \rangle_{(\bb{z},\overline{\bb{z}})}^{\phi} \langle \omega_p(h_{s+1}) \rangle_{(z_j,\overline{z_j})} \\[2mm]
                    - \Big(\langle H \omega(h_j)\rangle_{(\bb{z},\overline{\bb{z}})}^{\phi} - \langle H \rangle_{(\bb{z},\overline{\bb{z}})}^{\phi} \langle \omega_p(h_{s+1}) \rangle_{(z_j,\overline{z_j})}\Big) \langle \omega_p(h_{s+1}) \rangle_{(z_j,\overline{z_j})} \\[1.5mm]
                    - \langle H \rangle_{(\bb{z},\overline{\bb{z}})}^{\phi} \Big(\langle \omega_p^2(h_{s+1})\rangle_{(z_j,\overline{z_j})} - \langle \omega_p(h_{s+1}) \rangle_{(z_j,\overline{z_j})}^2\Big)
                \end{array}
                \hspace{-1mm}\right\}.
        \end{align}
        Using the relation $F_p(\bb{z},\overline{\bb{z}}) = \langle \omega_p(h_k) \rangle_{(\bb{z},\overline{\bb{z}})}^{\phi}$, \eqref{eq:second.derivative.z.bound}, and the triangle inequality, we obtain
        \begin{align}\label{eq:lem:bovier.kurkova.technique.p.end.proof.Jensen}
            |\partial_{z_j}^2 F_p(\bb{z},\overline{\bb{z}})|
            &= \beta^2 \left|\hspace{-1mm}
                \begin{array}{l}
                    \langle \omega_p(h_k) \omega^2(h_j)\rangle_{(\bb{z},\overline{\bb{z}})}^{\phi} \\[2mm]
                    - 2 \langle \omega_p(h_k) \omega(h_j) \rangle_{(\bb{z},\overline{\bb{z}})}^{\phi} \langle \omega_p(h_{s+1}) \rangle_{(z_j,\overline{z_j})} \\[2mm]
                    + 2 \langle \omega_p(h_k) \rangle_{(\bb{z},\overline{\bb{z}})}^{\phi} \langle \omega_p(h_{s+1}) \rangle_{(z_j,\overline{z_j})}^2 \\[2mm]
                    - \langle \omega_p(h_k) \rangle_{(\bb{z},\overline{\bb{z}})}^{\phi} \langle \omega_p^2(h_{s+1})\rangle_{(z_j,\overline{z_j})}
                \end{array}
                \hspace{-1mm}\right| \notag \\[1mm]
            &\leq \beta^2 \left\{\hspace{-1mm}
                \begin{array}{l}
                    \langle |\omega_p(h_k)| \cdot |\omega(h_j)|^2\rangle_{(\bb{z},\overline{\bb{z}})}^{|\phi|} \\[2mm]
                    + 2 \langle |\omega_p(h_k)| \cdot |\omega(h_j)| \rangle_{(\bb{z},\overline{\bb{z}})}^{|\phi|} \langle |\omega_p(h_{s+1})| \rangle_{(z_j,\overline{z_j})} \\[2mm]
                    + 2 \langle |\omega_p(h_k)| \rangle_{(\bb{z},\overline{\bb{z}})}^{|\phi|} \langle |\omega_p(h_{s+1})| \rangle_{(z_j,\overline{z_j})}^2 \\[2mm]
                    + \langle |\omega_p(h_k)| \rangle_{(\bb{z},\overline{\bb{z}})}^{|\phi|} \langle |\omega_p(h_{s+1})|^2\rangle_{(z_j,\overline{z_j})}
                \end{array}
                \hspace{-1mm}\right\}.
        \end{align}
        Since $|\omega_p(h)| = \frac{1}{2} p^{-1/2}$, $\langle 1 \rangle_{(z_j,\overline{z_j})} = 1$ and $\langle 1 \rangle_{(\bb{z},\overline{\bb{z}})}^{|\phi|} \leq \|\phi\|_{\infty}$, we deduce from \eqref{eq:lem:bovier.kurkova.technique.p.end.proof.Jensen} that
        \begin{equation}
            |\partial_{z_j}^2 F_p(z,\overline{z})| \leq \frac{6}{8} \beta^2 \|\phi\|_{\infty} p^{-3/2}.
        \end{equation}
        We obtain the bound on $\|\partial_{\overline{z_j}}^2 F_p\|_{\infty}$ in the same manner.
    \end{proof}

    The next proposition is a consequence of the two previous lemmas. It generalizes \eqref{eq:prop.3.Arguin.Tai.2017.rewrite}, which corresponds to the special case $(k = 1, s = 1, \phi \equiv 1)$.
    The idea for the statement originates from \cite{MR2070334}, and the idea behind the proof generalizes the special-case application in \cite{MR3211001}.
    See \cite{MR3354619,MR3731796} for an application in the context of the Gaussian free field.

    \begin{proposition}[Bovier-Kurkova technique]\label{eq:bovier.kurkova.technique}
        Let $\beta > 0$ and $0 \leq \alpha \leq 1$.
        For any $s\in \N^*$, any $k\in \{1,\ldots,s\}$, and any bounded mesurable function $\phi : [0,1]^s \rightarrow \R$, we have
        \begin{equation}\label{eq:prop:bovier.kurkova.technique.eq}
            \begin{aligned}
            &\left|\frac{1}{\beta} \cdot \frac{\EE G_{\beta,T}^{\times s}\big[X_{h_k}(\alpha) \phi(\bb{h})\big]}{\frac{1}{2} \log \log T} \right.\\[1mm]
            &\hspace{10mm}- \left.\left\{\hspace{-1mm}
                \begin{array}{l}
                    \sum_{l=1}^s \EE G_{\beta,T}^{\times s} \big[\int_0^{\alpha} \bb{1}_{\{y < \rho(h_k,h_l)\}} dy ~ \phi(\bb{h})\big] \\[1mm]
                    - s\, \EE G_{\beta,T}^{\times (s+1)} \big[\int_0^{\alpha} \bb{1}_{\{y < \rho(h_k,h_{s+1})\}} dy ~ \phi(\bb{h})\big]
                \end{array}
                \hspace{-1.5mm}\right\}\right| = O\hspace{-0.5mm}\left((\log \log T)^{-1}\right),
            \end{aligned}
        \end{equation}
        where $\bb{h} \circeq (h_1,h_2,\ldots,h_s)$.
    \end{proposition}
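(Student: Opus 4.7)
The plan is to apply Lemma \ref{lem:bovier.kurkova.technique.p} prime-by-prime and then sum, reducing the problem to a deterministic covariance estimate that can be dispatched by Lemma \ref{lem:covariance.estimates}. First, I would decompose $X_{h_k}(\alpha) = \sum_{p \leq \exp((\log T)^{\alpha})} W_p(h_k)$ so that, by linearity,
\begin{equation*}
\EE G_{\beta,T}^{\times s}[X_{h_k}(\alpha) \phi(\bb{h})] = \sum_{p \leq \exp((\log T)^{\alpha})} \EE G_{\beta,T}^{\times s}[W_p(h_k) \phi(\bb{h})].
\end{equation*}
Applying Lemma \ref{lem:bovier.kurkova.technique.p} to each summand produces an error of order $K p^{-3/2}$ with $K = s^2 C \beta^2 \|\phi\|_{\infty}$ independent of $p$. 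Since $\sum_p p^{-3/2}$ converges, the aggregated error from this step is bounded by a constant multiple of $s^2 \beta^2 \|\phi\|_{\infty}$, uniformly in $T$ and $\alpha$; after dividing by $\beta \cdot \tfrac{1}{2}\log\log T$ this contributes an $O((\log\log T)^{-1})$ term to \eqref{eq:prop:bovier.kurkova.technique.eq}.

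Next, the leading term from Lemma \ref{lem:bovier.kurkova.technique.p} involves deterministic covariances $\EE[W_p(h_k) W_p(h_l)]$ (and the analogue with $h_{s+1}$) multiplied by $\phi(\bb{h})$ inside the Gibbs expectation. Since these covariances depend only on the dummy integration variables, I would pull the sum over $p$ inside the Gibbs integral to get
\begin{equation*}
\sum_{p \leq \exp((\log T)^{\alpha})} \EE[W_p(h_k) W_p(h_l)] = \EE[X_{h_k}(\alpha) X_{h_l}(\alpha)],
\end{equation*}
and likewise for the $h_{s+1}$ term. This reduces the verification of \eqref{eq:prop:bovier.kurkova.technique.eq} to replacing each $\EE[X_{h_k}(\alpha) X_{h_l}(\alpha)] / (\tfrac{1}{2}\log\log T)$ inside the Gibbs integral by $\int_0^{\alpha} \bb{1}_{\{y < \rho(h_k,h_l)\}} dy$, up to an $O((\log\log T)^{-1})$ error.

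The third step invokes Lemma \ref{lem:covariance.estimates} together with the elementary identity $\int_0^{\alpha} \bb{1}_{\{y < \rho\}}\, dy = \min(\rho, \alpha)$, which merges the two cases of that lemma into the single expression $\int_0^{\alpha} \bb{1}_{\{y < \rho(h_k,h_l)\}} dy + O((\log\log T)^{-1})$, uniformly in $h_k, h_l, \alpha$. Since $\phi$ is bounded and the Gibbs measure is a probability measure, absorbing this error inside $\EE G_{\beta,T}^{\times s}$ and $\EE G_{\beta,T}^{\times (s+1)}$ still yields $O((\log\log T)^{-1})$; combined with the error from the first step, this proves \eqref{eq:prop:bovier.kurkova.technique.eq}.

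The main technical point, though not really an obstacle since both ingredients are already in place, is the \emph{uniformity} of the errors in $\alpha$ and in the dummy variables $(h_1,\ldots,h_{s+1})$: the prime sum $\sum_p p^{-3/2}$ must converge independently of the truncation level $\exp((\log T)^{\alpha})$, and the prime-number-theorem remainder in Lemma \ref{lem:covariance.estimates} must be uniform so that it survives integration against the random Gibbs probability measure. Both are guaranteed by the statements of the cited lemmas, making the combination essentially a bookkeeping exercise.
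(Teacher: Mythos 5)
Your proof is correct and follows essentially the same route as the paper: you sum Lemma~\ref{lem:bovier.kurkova.technique.p} over primes $p \leq \exp((\log T)^{\alpha})$, use the convergence of $\sum_p p^{-3/2}$ to absorb the aggregated error after dividing by $\tfrac{\beta}{2}\log\log T$, recognize $\sum_p \EE[W_p(h_k)W_p(h_l)] = \EE[X_{h_k}(\alpha)X_{h_l}(\alpha)]$, and close with Lemma~\ref{lem:covariance.estimates} via the identity $\int_0^{\alpha} \bb{1}_{\{y < \rho\}}\,dy = \min(\rho,\alpha)$. The paper's proof is organized by first expanding the $\min(\rho,\alpha)$ expression and then summing the prime-by-prime estimate, but the ingredients, the order of the quantifiers, and the error bookkeeping are the same as yours.
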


    \begin{proof}
        For any $l\in \{1,\ldots,s+1\}$,
        \begin{equation}\label{eq:prop:BV.technique.beginning}
            \begin{aligned}
                \mathbb{E}G_{\beta,T}^{\times (s+1)} \big[\int_0^{\alpha} \bb{1}_{\{y < \rho(h_k,h_l)\}} dy ~\phi(\bb{h})\big]
                &= \mathbb{E}G_{\beta,T}^{\times (s+1)} \big[\rho(h_k,h_l) \, \bb{1}_{\{\rho(h_k,h_l) \leq \alpha\}}\, \phi(\bb{h})\big] \\
                &+ \mathbb{E}G_{\beta,T}^{\times (s+1)} \big[\alpha \, \bb{1}_{\{\rho(h_k,h_l) > \alpha\}}\, \phi(\bb{h})\big].
            \end{aligned}
        \end{equation}
        On the other hand, if we sum \eqref{eq:prop:bovier.kurkova.technique.p.eq} over the set $\{p ~\text{prime} : p \leq \exp((\log T)^{\alpha})\}$ and divide by $\frac{\beta}{2} \log \log T$, we obtain
        \begin{equation}\label{eq:prop:BV.technique.end}
            \begin{aligned}
            &\left|\frac{1}{\beta} \cdot \frac{\EE G_{\beta,T}^{\times s}[X_{h_k}(\alpha) \phi(\bb{h})]}{\frac{1}{2} \log \log T} \right. \\
                &\hspace{10mm}-
                \left.\left\{\hspace{-1.5mm}
                \begin{array}{l}
                    \sum_{l=1}^s \EE G_{\beta,T}^{\times s} \left[\frac{\EE[X_{h_k}(\alpha)X_{h_l}(\alpha)]}{\frac{1}{2} \log \log T} \, \phi(\bb{h})\right] \\[2mm]
                    - s \, \EE G_{\beta,T}^{\times (s+1)} \left[\frac{\EE[X_{h_k}(\alpha) X_{h_{s+1}}(\alpha)]}{\frac{1}{2} \log \log T} \, \phi(\bb{h})\right]
                \end{array}
                \hspace{-1.5mm}\right\}
            \right| = O\hspace{-0.5mm}\left((\log \log T)^{-1}\right).
            \end{aligned}
        \end{equation}
        Now, one by one, take the difference in absolute value between each of the $s+1$ expectations inside the braces in \eqref{eq:prop:BV.technique.end} and the corresponding expectation on the left-hand side of \eqref{eq:prop:BV.technique.beginning}. We obtain the bound \eqref{eq:prop:bovier.kurkova.technique.eq} by using Lemma \ref{lem:covariance.estimates}.
    \end{proof}

    Our goal now is to combine Proposition \ref{eq:bovier.kurkova.technique} with a concentration result (Proposition \ref{prop:concentration.result}) in order to prove an approximate version of the GG identities (Theorem \ref{thm:approximate.extended.GG.identities}). We will then show that the identities must hold exactly in the limit $T\to \infty$ (Theorem \ref{thm:extended.GG.identities}).
    Before stating and proving the concentration result, we show that $f_{\alpha,\beta}(\cdot)$, the limiting perturbed free energy, is differentiable in an open interval around $0$.

    \begin{lemma}\label{lem:differentiability.limiting.free.energy}
        Let $\beta > \beta_c \circeq 2$ and $0 \leq \alpha \leq 1$. There exists $\delta = \delta(\alpha,\beta) > 0$ small enough that $f_{\alpha,\beta}(\cdot)$ from Proposition \ref{prop:convergence.free.energy} is differentiable on $(-\delta,\delta)$. Also, we have $f_{\alpha,\beta}'(0) = \beta \alpha$.
    \end{lemma}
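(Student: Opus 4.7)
The plan is to read off the derivative directly from the explicit formula for $f_{\alpha,\beta}(u)$ provided by Proposition \ref{prop:convergence.free.energy} and to verify that the two branches (one for $u\geq 0$ and one for $u<0$) match smoothly at $u=0$ in a small neighborhood.

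First I would observe that $V_{\alpha,0}=(1+0)^2\alpha+(1-\alpha)=1$, hence $2/\sqrt{V_{\alpha,0}}=2=\beta_c$. Since $\beta>\beta_c$ is fixed and the map $u\mapsto V_{\alpha,u}=(1+u)^2\alpha+(1-\alpha)$ is continuous and strictly positive near $0$, there exists $\delta=\delta(\alpha,\beta)>0$ such that $V_{\alpha,u}>4/\beta^2$ for every $u\in(-\delta,\delta)$. Equivalently, $\beta>2/\sqrt{V_{\alpha,u}}$ on this interval. Consequently, Proposition \ref{prop:convergence.free.energy} gives the explicit form
\begin{equation*}
f_{\alpha,\beta}(u)=
\begin{cases}
\beta\sqrt{V_{\alpha,u}}-1, & u\in(-\delta,0),\\[1mm]
\beta(\alpha u+1)-1, & u\in[0,\delta).
\end{cases}
\end{equation*}

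Next I would check differentiability. On $(0,\delta)$ the function is affine, hence smooth, with derivative $\beta\alpha$. On $(-\delta,0)$, since $V_{\alpha,u}$ is a smooth positive function of $u$, the map $u\mapsto \beta\sqrt{V_{\alpha,u}}-1$ is also smooth and
\begin{equation*}
\frac{d}{du}\bigl(\beta\sqrt{V_{\alpha,u}}-1\bigr)=\frac{\beta(1+u)\alpha}{\sqrt{V_{\alpha,u}}}.
\end{equation*}
It remains to glue the two branches at $u=0$. Continuity follows from $\sqrt{V_{\alpha,0}}=1$, so both expressions give $f_{\alpha,\beta}(0)=\beta-1$. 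Taking the right derivative from the linear branch yields $\beta\alpha$, while taking the left derivative from the square root branch and evaluating at $u=0$ also yields $\beta(1)\alpha/\sqrt{1}=\beta\alpha$. Hence the one-sided derivatives agree, $f_{\alpha,\beta}$ is differentiable on all of $(-\delta,\delta)$, and $f_{\alpha,\beta}'(0)=\beta\alpha$.

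There is no real obstacle here; the whole content is the algebraic identity $V_{\alpha,0}=1$, which places $u=0$ precisely on the boundary $\beta=2/\sqrt{V_{\alpha,u}}$ inside the regime $\beta>\beta_c$, together with the routine check that the square root branch and the affine branch have matching values and matching derivatives at $0$. The mild subtlety worth flagging is that one must shrink $\delta$ if necessary so that the first case of \eqref{eq:prop.4.Arguin.Tai.2017} (the ``high temperature'' branch with $\beta\leq 2/\sqrt{V_{\alpha,u}}$) is never entered for $u\in(-\delta,0)$; this is automatic because at $u=0$ we sit strictly above the threshold, $\beta>2=2/\sqrt{V_{\alpha,0}}$, and the strict inequality persists under small perturbations of $u$ by continuity of $V_{\alpha,u}$.
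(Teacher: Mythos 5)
Your proposal is correct and follows essentially the same route as the paper: restrict to a small $\delta$-neighborhood of $0$ where the explicit formula from Proposition \ref{prop:convergence.free.energy} reduces to the two relevant branches, then check that they glue smoothly at $u=0$ with common derivative $\beta\alpha$. The only cosmetic difference is that the paper evaluates the one-sided difference quotients at $0$ directly, whereas you differentiate the square-root branch on $(-\delta,0)$ and take the limit of the derivative as $u\to 0^-$; both yield the same conclusion.
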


    \begin{proof}
        Since $\beta > 2$ and $\lim_{u\to 0} V_{\alpha,u} = 1$, there exists $\delta = \delta(\alpha,\beta) > 0$ small enough that, for all $u\in (-\delta,\delta)$,
        \begin{equation}\label{eq:limiting.free.energy.expression}
            f_{\alpha,\beta}(u) =
                \left\{\hspace{-1mm}
                \begin{array}{ll}
                    \beta \sqrt{V_{\alpha,u}} - 1, &\mbox{if } u < 0, \\
                    \beta(\alpha u + 1) - 1, &\mbox{if } u \geq 0.
                \end{array}
                \right.
        \end{equation}
        The differentiability of $f_{\alpha,\beta}(\cdot)$ on $(-\delta,\delta)\backslash \{0\}$ is obvious.
        Also,
        \begin{equation}\label{eq:limiting.free.energy.derivative.linear}
            \frac{f_{\alpha,\beta}(u) - f_{\alpha,\beta}(0)}{u} =
                \left\{\hspace{-1mm}
                \begin{array}{ll}
                    \beta \frac{\sqrt{V_{\alpha,u}} - 1}{u}, &\mbox{if } u < 0, \\
                    \beta\alpha, &\mbox{if } u \geq 0.
                \end{array}
                \right.
        \end{equation}
        Take both the left and right limits at $0$ to conclude.
    \end{proof}

    Here is the concentration result.
    It is analogous to Theorem 3.8 in \cite{MR3052333}, which was proved for the mixed $p$-spin model.
    We give the proof for completeness.

    \begin{proposition}[Concentration]\label{prop:concentration.result}
        Let $\beta > \beta_c \circeq 2$ and $0 < \alpha < 1$.
        For any $s\in \N^*$, any $k\in \{1,\ldots,s\}$, and any bounded mesurable function $\phi : [0,1]^s \rightarrow \R$, we have
        \begin{equation}
            \left|\frac{\EE G_{\beta,T}^{\times s}[X_{h_k}(\alpha) \phi(\bb{h})]}{\log \log T} - \frac{\EE G_{\beta,T}[X_{h_k}(\alpha)]}{\log \log T} \EE G_{\beta,T}^{\times s}[\phi(\bb{h})]\right| = o_T(1),
        \end{equation}
        where $\bb{h} \circeq (h_1,h_2,\ldots,h_s)$.
    \end{proposition}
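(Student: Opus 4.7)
My plan is to run a Bovier-Kurkova-type differentiation argument on a $\phi$-tilted, multi-parameter free energy. Writing $\phi = (\phi + C) - C$ for some $C > \|\phi\|_\infty$ reduces to the case $\phi \geq 1$ (both sides of the statement acquire the matching term $C \, \EE G_{\beta,T}[X_h(\alpha)]/\log\log T$ from the subtracted constant, which cancels exactly). Assuming now $1 \leq \phi \leq \|\phi\|_\infty$, introduce the tilted free energy, for $\bb{u} = (u_1,\ldots,u_s)$ in a small neighborhood of $\bb{0}$,
\begin{equation*}
    f_T^\phi(\bb{u}) \circeq \frac{1}{\log\log T}\log \int_{[0,1]^s} \phi(\bb{h})\prod_{l=1}^s e^{\beta(u_l X_{h_l}(\alpha)+X_{h_l})}\,d\bb{h}.
\end{equation*}
Let $G_{\beta,T}^{u_l}$ denote the single-replica Gibbs measure with Hamiltonian $\beta(u_l X_h(\alpha) + X_h)$. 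The factorization of the integrand across replicas gives
\begin{equation*}
    f_T^\phi(\bb{u}) = \sum_{l=1}^s f_{\alpha,\beta,T}(u_l) + \frac{1}{\log\log T}\log \int \phi(\bb{h}) \prod_{l=1}^s dG_{\beta,T}^{u_l}(h_l),
\end{equation*}
and since the integrand of the last term lies in $[1,\|\phi\|_\infty]$, that term is deterministically $O((\log\log T)^{-1})$ uniformly in $\bb{u}$. Combined with Proposition \ref{prop:convergence.free.energy}, this yields $f_T^\phi(\bb{u}) \to \sum_{l=1}^s f_{\alpha,\beta}(u_l)$ in $L^1$ at each $\bb{u}$ near $\bb{0}$, and by Lemma \ref{lem:differentiability.limiting.free.energy} the deterministic limit is differentiable in each direction $u_k$ at $\bb{0}$ with partial derivative $\beta\alpha$.

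Because $\phi \geq 0$, $f_T^\phi$ is coordinate-wise convex in $u_k$ (log of an integral of a nonnegative weight against $e^{\beta u_k X_{h_k}(\alpha)}$ is a log-moment-generating function). Convexity yields the one-sided bracket
\begin{equation*}
    \frac{f_T^\phi(\bb{0}) - f_T^\phi(-\epsilon \bb{e}_k)}{\epsilon} \leq \partial_{u_k} f_T^\phi(\bb{0}) \leq \frac{f_T^\phi(\epsilon \bb{e}_k) - f_T^\phi(\bb{0})}{\epsilon},
\end{equation*}
and taking $\EE|\cdot - \beta\alpha|$ of both bounds, the $L^1$ convergence at the three points $\{\bb{0}, \pm \epsilon \bb{e}_k\}$ together with differentiability of the deterministic limit at $\bb{0}$ produces, after sending first $T \to \infty$ and then $\epsilon \to 0$, the convergence $\EE|\partial_{u_k} f_T^\phi(\bb{0}) - \beta\alpha| = o_T(1)$.

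A direct computation differentiating $\log \int \phi(\bb{h}) \prod_l e^{\beta (u_l X_{h_l}(\alpha) + X_{h_l})} d\bb{h}$ in $u_k$ at $\bb{u} = \bb{0}$ gives
\begin{equation*}
    \partial_{u_k} f_T^\phi(\bb{0}) = \frac{\beta}{\log\log T} \cdot \frac{G_{\beta,T}^{\times s}[X_{h_k}(\alpha)\phi(\bb{h})]}{G_{\beta,T}^{\times s}[\phi(\bb{h})]}.
\end{equation*}
Multiplying the $L^1$-convergent ratio by the uniformly bounded denominator $G_{\beta,T}^{\times s}[\phi(\bb{h})]$ preserves $L^1$ convergence, so $G_{\beta,T}^{\times s}[X_{h_k}(\alpha)\phi(\bb{h})]/\log\log T \to \alpha \, G_{\beta,T}^{\times s}[\phi(\bb{h})]$ in $L^1$; taking expectations yields $\EE G_{\beta,T}^{\times s}[X_{h_k}(\alpha)\phi(\bb{h})]/\log\log T \to \alpha \, \EE G_{\beta,T}^{\times s}[\phi(\bb{h})]$. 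Specializing the same argument to $s = 1$, $\phi \equiv 1$ identifies $\alpha = \lim_T \EE G_{\beta,T}[X_h(\alpha)]/\log\log T$ (alternatively, this follows from Proposition \ref{prop:mean.convergence.derivative.free.energy} combined with Theorem \ref{thm:limiting.two.overlap.distribution}), and the statement follows. The main technical obstacle is the $L^1$-to-derivative passage in the second paragraph: the bracketing argument requires that the pointwise $L^1$ convergence of Proposition \ref{prop:convergence.free.energy} transfer to all three perturbation points $\{\bb{0}, \pm\epsilon\bb{e}_k\}$ uniformly in a shrinking window $\epsilon \to 0$, which is ensured by the explicit formula for $f_{\alpha,\beta}(u)$ on $(-\delta,\delta)$ and by the fact that the $O((\log\log T)^{-1})$ defect term does not depend on $\bb{u}$.
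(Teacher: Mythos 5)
Your proof is correct, but it takes a genuinely different route from the paper's. The paper first discards all $\phi$-dependence by bounding $|\phi|\leq\|\phi\|_\infty$ (via Jensen/triangle inequality), reducing the claim to concentration of the single replica observable $G_{\beta,T}[X_h(\alpha)]$; it then splits the fluctuation into a Gibbs term $(a)=\EE G_{\beta,T}|X_h(\alpha)-G_{\beta,T}[X_h(\alpha)]|$ and a disorder term $(b)=\EE|G_{\beta,T}[X_h(\alpha)]-\EE G_{\beta,T}[X_h(\alpha)]|$, controlling $(a)$ by an interpolation inequality involving $f_{\alpha,\beta,T}''$ and $(b)$ by the classical convexity-bracket argument, each step invoking the $L^1$ convergence and differentiability of the limit. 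You instead keep $\phi$ in play: after the (valid) affine reduction to $\phi\geq 1$, you define a $\phi$-tilted, multi-parameter free energy $f_T^\phi(\bb{u})$ and observe the clean decomposition $f_T^\phi(\bb{u})=\sum_l f_{\alpha,\beta,T}(u_l)+O((\log\log T)^{-1})$, uniformly in $\bb{u}$, because the $\phi$-correction is the log of a quantity sandwiched in $[1,\|\phi\|_\infty]$. Since $\phi\geq 0$ makes $f_T^\phi$ coordinate-wise convex, a single application of the standard convexity-bracket argument (with the limits taken in the correct order, $T\to\infty$ then $\epsilon\to 0^+$, as you note) delivers $\EE|\partial_{u_k}f_T^\phi(\bb{0})-\beta\alpha|=o_T(1)$ directly, which unwinds to the claimed statement. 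Both routes lean on Proposition~\ref{prop:convergence.free.energy} and Lemma~\ref{lem:differentiability.limiting.free.energy}; the paper's proof is more modular (it never needs a $\phi$-dependent free energy), while yours is more unified, handling the Gibbs and disorder fluctuations in one convexity argument and avoiding the second-derivative interpolation estimate for part $(a)$ entirely. One small caveat worth stating explicitly in a final write-up: the identity $\partial_{u_k}f_T^\phi(\bb{0})=\beta(\log\log T)^{-1}G_{\beta,T}^{\times s}[X_{h_k}(\alpha)\phi(\bb{h})]/G_{\beta,T}^{\times s}[\phi(\bb{h})]$ and the subsequent multiplication by the bounded denominator both require $\phi\geq 1$, which your reduction supplies; and the final recombination needs $\EE G_{\beta,T}^{\times s}[\phi]$ bounded, which is immediate.
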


    \begin{proof}
        By applying Jensen's inequality to the expectation $\mathbb{E}G_{\beta,T}^{\times s}[ \, \cdot \, ]$, followed by the triangle inequality,
        \begin{align*}
            &\big|\mathbb{E}G_{\beta,T}^{\times s}[X_{h_k}(\alpha) \phi(\bb{h})] - \mathbb{E}G_{\beta,T}[X_{h_k}(\alpha)] \mathbb{E}G_{\beta,T}^{\times s}[\phi(\bb{h})]\big| \notag \\[0.5mm]
            &\quad\quad\leq \mathbb{E}G_{\beta,T} \big|X_{h_k}(\alpha) - \mathbb{E}G_{\beta,T} [X_{h_k}(\alpha)]\big| \cdot \|\phi\|_{\infty} \notag \\
            &\quad\quad\leq \left\{\hspace{-1mm}
                \begin{array}{l}
                    \mathbb{E}G_{\beta,T}\big|X_{h_k}(\alpha) - G_{\beta,T}[X_{h_k}(\alpha)]\big| \\[1mm]
                    + \mathbb{E}\big|G_{\beta,T}[X_{h_k}(\alpha)] - \mathbb{E}G_{\beta,T}[X_{h_k}(\alpha)]\big|
                \end{array}
                \hspace{-1mm}\right\} \cdot \|\phi\|_{\infty} \notag \\
            &\quad\quad\circeq \big\{(a) + (b)\big\} \cdot \|\phi\|_{\infty}.
        \end{align*}
        Below, we show that $(a)$ and $(b)$ are $o(\log \log T)$ in Step 1 and Step 2, respectively.

        \vspace{3mm}
        \noindent
        {\bf Step 1}. Note that
        \vspace{-2mm}
        \begin{align}\label{eq:lem:concentration.result.start.step.1}
            (a)
            &= \mathbb{E}G_{\beta,T}\Big|\int_0^1 (X_{h_1}(\alpha) - X_{h_2}(\alpha))\frac{e^{\beta X_{h_2}}}{\int_0^1 \hspace{-0.5mm}e^{\beta X_{z_2}} dz_2} dh_2\Big| \notag \\
            &\leq \mathbb{E}G_{\beta,T}^{\times 2}\big|X_{h_1}(\alpha) - X_{h_2}(\alpha)\big|.
        \end{align}
        For $u \geq 0$, we define a perturbed version of the last quantity, where the Gibbs measure $G_{\beta,T,u}$ is now defined with respect to the field $(u X_h(\alpha) + X_h, h\in [0,1])$ :
        \begin{align}
            D_{\alpha,\beta,T}(u)
            &\circeq \mathbb{E}G_{\beta,T,u}^{\times 2} \big|X_{h_1}(\alpha) - X_{h_2}(\alpha)\big|.
        \end{align}
        We can easily verify that
        \begin{equation}\label{eq:lem:concentration.result.derivative.D}
            D_{\alpha,\beta,T}'(y) = \beta ~\mathbb{E}G_{\beta,T,y}^{\times 3}\Big[\big|X_{h_1}(\alpha) - X_{h_2}(\alpha)\big| \cdot \big(X_{h_1}(\alpha) + X_{h_2}(\alpha) - 2X_{h_3}(\alpha)\big)\Big].
        \end{equation}
        If we separate the expectation in \eqref{eq:lem:concentration.result.derivative.D} in two parts and apply the Cauchy-Schwarz inequality to each one of them, followed by an application of the elementary inequality $(c + d)^2 \leq 2c^2 + 2d^2$, we find, for $y \geq 0$,
        \begin{align}\label{eq:lem:IGFF.ghirlanda.guerra.restricted.2.a.bound.derivative}
            \left|D_{\alpha,\beta,T}'(y)\right|
            &\leq \beta \cdot
                \left\{\hspace{-1mm}
                \begin{array}{l}
                    \mathbb{E}G_{\beta,T,y}^{\times 3} \big|X_{h_1}(\alpha) - X_{h_2}(\alpha)\big| \big|X_{h_1}(\alpha) - X_{h_3}(\alpha)\big| \\[1mm]
                    +\, \mathbb{E}G_{\beta,T,y}^{\times 3} \big|X_{h_1}(\alpha) - X_{h_2}(\alpha)\big| \big|X_{h_2}(\alpha) - X_{h_3}(\alpha)\big|
                \end{array}
                \hspace{-1mm}\right\} \notag \\[1mm]
            &\leq \beta \cdot 2\, \mathbb{E}G_{\beta,T,y}^{\times 2} [(X_{h_1}(\alpha) - X_{h_2}(\alpha))^2] \notag \\
            &\leq \beta \cdot 8\, \mathbb{E}G_{\beta,T,y} [\big(X_h(\alpha) - G_{\beta,T,y}[X_h(\alpha)]\big)^2].
        \end{align}
        Note that $\beta^{-2} (\log \log T) f_{\alpha,\beta,T}''(y) = G_{\beta,T,y} [\big(X_h(\alpha) - G_{\beta,T,y}[X_h(\alpha)]\big)^2]$ and apply inequality \eqref{eq:lem:IGFF.ghirlanda.guerra.restricted.2.a.bound.derivative} in the identity $u D_{\alpha,\beta,T}(0) = \int_0^u D_{\alpha,\beta,T}(y) dy - \int_0^u \int_0^x D_{\alpha,\beta,T}'(y) dy dx$. We obtain, for $u > 0$,
        \begin{align}\label{eq:lem:concentration.result.D.0.bound}
            D_{\alpha,\beta,T}(0)
            &\leq \frac{1}{u} \int_0^u D_{\alpha,\beta,T}(y) dy + \int_0^u \left|D_{\alpha,\beta,T}'(y)\right| dy \notag \\
            &\leq 2 \left(\frac{1}{u} \int_0^u \beta^{-2} (\log \log T) \EE [f_{\alpha,\beta,T}''(y)] dy\right)^{1/2} \notag \\
            &+ 8 \beta \int_0^u \beta^{-2} (\log \log T) \EE[f_{\alpha,\beta,T}''(y)] dy.
        \end{align}
        In order to bound $\frac{1}{u}\int_0^u D_{\alpha,\beta,T}(y) dy$, we separated $D_{\alpha,\beta,T}(y)$ in two parts (with the triangle inequality) and we applied the Cauchy-Schwarz inequality to the two resulting expectations $\frac{1}{u} \int_0^u \mathbb{E}G_{\beta,T,y}[\, \cdot\, ]\, dy$.
        Now, on the right-hand side of \eqref{eq:lem:concentration.result.D.0.bound}, use the convexity of $f_{\alpha,\beta,T}(\cdot)$ and the mean convergence of $f_{\alpha,\beta,T}(z), ~z > -1$, from Proposition \ref{prop:convergence.free.energy}. We get, for all $u > 0$ and all $y\in (0,1)$,
        \begin{align}
            \limsup_{T\rightarrow \infty} \frac{(a)}{\log \log T}
            &\stackrel{\eqref{eq:lem:concentration.result.start.step.1}}{\leq} \limsup_{T\rightarrow \infty} \frac{D_{\alpha,\beta,T}(0)}{\log \log T} \notag \\[0.5mm] &\stackrel{\eqref{eq:lem:concentration.result.D.0.bound}}{\leq} \frac{8}{\beta} \cdot \left(\frac{f_{\alpha,\beta}(u + y) - f_{\alpha,\beta}(u)}{y} - \frac{f_{\alpha,\beta}(0) - f_{\alpha,\beta}(-y)}{y}\right).
        \end{align}
        From Lemma \ref{lem:differentiability.limiting.free.energy}, there exists $\delta = \delta(\alpha,\beta) > 0$ such that $f_{\alpha,\beta}(\cdot)$ is differentiable on $(-\delta,\delta)$. Therefore, take $u \rightarrow 0^+$ and then $y \rightarrow 0^+$ in the above equation to conclude Step 1.

        \vspace{3mm}
        \noindent
        {\bf Step 2}. For all $u\in (0,1)$, let
        \begin{equation}
            \begin{aligned}
                \eta_{\alpha,\beta,T}(u)
                &\circeq \big|f_{\alpha,\beta,T}(-u) - \mathbb{E}[f_{\alpha,\beta,T}(-u)]\big| + \big|f_{\alpha,\beta,T}(0) - \mathbb{E}[f_{\alpha,\beta,T}(0)]\big| \\[1mm]
                &\quad+ \big|f_{\alpha,\beta,T}(u) - \mathbb{E}[f_{\alpha,\beta,T}(u)]\big|.
            \end{aligned}
        \end{equation}
        Differentiation of the free energy gives $f_{\alpha,\beta,T}'(0) = \beta (\log \log T)^{-1} G_{\beta,T}[X_{h_k}(\alpha)]$.
        Then, from the convexity of $f_{\alpha,\beta,T}(\cdot)$,
        \vspace{-1mm}
        \begin{align}
            \beta \cdot \frac{(b)}{\log \log T}
            &= \EE\big|f_{\alpha,\beta,T}'(0) - \EE[f_{\alpha,\beta,T}'(0)]\big| \notag \\[1mm]
            &\leq \left|\frac{\mathbb{E}[f_{\alpha,\beta,T}(u)] - \mathbb{E}[f_{\alpha,\beta,T}(0)]}{u} - \mathbb{E}[f_{\alpha,\beta,T}'(0)]\right| \notag \\[1mm]
            &+ \left|\frac{\mathbb{E}[f_{\alpha,\beta,T}(0)] - \mathbb{E}[f_{\alpha,\beta,T}(-u)]}{u} - \mathbb{E}[f_{\alpha,\beta,T}'(0)]\right| + \frac{\mathbb{E}[\eta_{\alpha,\beta,T}(u)]}{u}.
        \end{align}
        Using the $L^1$ convergence of $f_{\alpha,\beta,T}(z), ~z > -1$, from Proposition \ref{prop:convergence.free.energy}, and the mean convergence of $f_{\alpha,\beta,T}'(0)$ from Proposition \ref{prop:mean.convergence.derivative.free.energy} (the limit is $f_{\alpha,\beta}'(0)$ by Lemma \ref{lem:differentiability.limiting.free.energy}, the convexity of $\EE[f_{\alpha,\beta,T}(\cdot)]$ and $f_{\alpha,\beta}(\cdot)$, and by Theorem 25.7 in \cite{MR0274683}), we deduce that for all $u\in (0,1)$,
        \begin{equation*}
            \limsup_{T\rightarrow \infty} \frac{(b)}{\log \log T}
            \leq \frac{1}{\beta} \cdot \left\{\hspace{-1mm}
                \begin{array}{l}
                    \left|\frac{f_{\alpha,\beta}(u) - f_{\alpha,\beta}(0)}{u} - f_{\alpha,\beta}'(0)\right| \\[2mm]
                    + \left|\frac{f_{\alpha,\beta}(0) - f_{\alpha,\beta}(-u)}{u} - f_{\alpha,\beta}'(0)\right|
                \end{array}
                \hspace{-1mm}\right\},
        \end{equation*}
        Take $u\rightarrow 0^+$ in the last equation, the differentiability of $f_{\alpha,\beta}(\cdot)$ at $0$ (from Lemma \ref{lem:differentiability.limiting.free.energy}) concludes Step 2.
    \end{proof}

    \begin{theorem}[Approximate extended Ghirlanda-Guerra identities]\label{thm:approximate.extended.GG.identities}
        Let $\beta > \beta_c \circeq 2$ and $0 < \alpha < 1$.
        For any $s\in \N^*$, any $k\in \{1,\ldots,s\}$, and any bounded mesurable function $\phi : [0,1]^s \rightarrow \R$, we have
        \begin{equation}
            \begin{aligned}
            &\left|
                \EE G_{\beta,T}^{(s+1)} \big[\int_0^{\alpha} \bb{1}_{\{y < \rho(h_k,h_{s+1})\}} dy \, \phi(\bb{h})\big] \right.\\[1mm]
                &\hspace{10mm}- \left.
                \left\{\hspace{-1mm}
                    \begin{array}{l}
                        \frac{1}{s} \EE G_{\beta,T}^{\times 2} \big[\int_0^{\alpha} \bb{1}_{\{y < \rho(h_1,h_2)\}} dy\big] \EE G_{\beta,T}^{\times s}[\phi(\bb{h})] \\[2mm]
                        + \frac{1}{s} \sum_{l \neq k}^s \EE G_{\beta,T}^{\times s} \big[\int_0^{\alpha} \bb{1}_{\{y < \rho(h_k,h_l)\}} dy \, \phi(\bb{h})\big]
                    \end{array}
                \hspace{-1mm}\right\}
            \right| = o_T(1),
            \end{aligned}
        \end{equation}
        where $\bb{h} \circeq (h_1,h_2,\ldots,h_s)$.
    \end{theorem}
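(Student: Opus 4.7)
The plan is to combine the three main tools assembled earlier in this section: the Bovier-Kurkova technique (Proposition \ref{eq:bovier.kurkova.technique}), the concentration result (Proposition \ref{prop:concentration.result}), and the asymptotic identity \eqref{eq:prop.3.Arguin.Tai.2017.rewrite} for $\EE G_{\beta,T}[X_{h_k}(\alpha)]$ from Proposition \ref{prop:mean.convergence.derivative.free.energy}. The overall idea is that the left-hand side of the Bovier-Kurkova identity, $\frac{1}{\beta} \cdot \EE G_{\beta,T}^{\times s}[X_{h_k}(\alpha) \phi(\bb{h})] / (\tfrac12 \log\log T)$, approximately factorizes via concentration into $\frac{1}{\beta} \cdot \EE G_{\beta,T}[X_{h_k}(\alpha)]/(\tfrac12 \log\log T) \cdot \EE G_{\beta,T}^{\times s}[\phi(\bb{h})]$, and the first factor equals $\alpha - \EE G_{\beta,T}^{\times 2}[\int_0^\alpha \bb{1}_{\{y < \rho(h,h')\}} dy] + o_T(1)$ by \eqref{eq:prop.3.Arguin.Tai.2017.rewrite}.

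Concretely, I would first apply Proposition \ref{eq:bovier.kurkova.technique} and separate in the sum $\sum_{l=1}^{s}$ the diagonal index $l = k$, which, since $\rho(h_k,h_k) = 1 > \alpha$, contributes exactly $\alpha\, \EE G_{\beta,T}^{\times s}[\phi(\bb{h})]$, from the off-diagonal indices $l \neq k$. This expresses $\frac{2}{\beta \log\log T}\, \EE G_{\beta,T}^{\times s}[X_{h_k}(\alpha) \phi(\bb{h})]$ as $\alpha\, \EE G_{\beta,T}^{\times s}[\phi(\bb{h})]$ plus the $l \neq k$ overlap terms, minus $s$ times the $(s+1)$-replica overlap term from the conclusion, up to $O((\log\log T)^{-1})$. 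Next, I apply Proposition \ref{prop:concentration.result} to factorize the left-hand side as $\frac{2}{\beta\log\log T}\, \EE G_{\beta,T}[X_{h_k}(\alpha)] \cdot \EE G_{\beta,T}^{\times s}[\phi(\bb{h})] + o_T(1)$, and then invoke \eqref{eq:prop.3.Arguin.Tai.2017.rewrite} to rewrite that factor as $\bigl(\alpha - \EE G_{\beta,T}^{\times 2}[\int_0^\alpha \bb{1}_{\{y < \rho(h,h')\}} dy]\bigr)\, \EE G_{\beta,T}^{\times s}[\phi(\bb{h})] + o_T(1)$.

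The diagonal term $\alpha\, \EE G_{\beta,T}^{\times s}[\phi(\bb{h})]$ now cancels on both sides, leaving
\begin{align*}
&s\, \EE G_{\beta,T}^{\times (s+1)}\Big[\int_0^\alpha \bb{1}_{\{y < \rho(h_k,h_{s+1})\}} dy\, \phi(\bb{h})\Big] \\
&\qquad = \EE G_{\beta,T}^{\times 2}\Big[\int_0^\alpha \bb{1}_{\{y < \rho(h_1,h_2)\}} dy\Big]\, \EE G_{\beta,T}^{\times s}[\phi(\bb{h})] \\
&\qquad\qquad + \sum_{l \neq k}^{s} \EE G_{\beta,T}^{\times s}\Big[\int_0^\alpha \bb{1}_{\{y < \rho(h_k,h_l)\}} dy\, \phi(\bb{h})\Big] + o_T(1),
\end{align*}
which is exactly the claimed identity after dividing by $s$. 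The main obstacle is in fact modest; the argument is essentially algebraic once the three ingredients are in place. What requires care is the bookkeeping: verifying that the various $O((\log\log T)^{-1})$ and $o_T(1)$ error terms from the three inputs combine compatibly, which they do since each is uniform in $\alpha$ and depends on $\phi$ only through $\|\phi\|_\infty$. All the genuine technical work---the approximate complex integration by parts, the convexity-and-differentiability-based concentration argument, and the Bovier-Kurkova computation of the limiting perturbed free energy---has already been carried out in the earlier lemmas and in \cite{arXiv:1706.08462}.
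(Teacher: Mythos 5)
Your proof is correct and follows essentially the same route as the paper: apply Proposition \ref{eq:bovier.kurkova.technique}, factorize via Proposition \ref{prop:concentration.result}, and substitute in the scalar identity for $\EE G_{\beta,T}[X_{h_k}(\alpha)]$. The only cosmetic difference is that you invoke \eqref{eq:prop.3.Arguin.Tai.2017.rewrite} from Proposition \ref{prop:mean.convergence.derivative.free.energy} directly, whereas the paper rederives that same identity as the $(s=1,k=1,\phi\equiv 1)$ special case of Proposition \ref{eq:bovier.kurkova.technique}; as the paper itself remarks, these are one and the same statement.
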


    \begin{proof}
        From Proposition \ref{eq:bovier.kurkova.technique}, Proposition \ref{prop:concentration.result} and the triangle inequality, we get
        \begin{equation}\label{thm:approximate.extended.GG.identities.eq.1}
            \begin{aligned}
            &\left|
                \frac{1}{\beta} \cdot \frac{\EE G_{\beta,T}[X_{h_k}(\alpha)]}{\frac{1}{2} \log \log T} \EE G_{\beta,T}^{\times s}[\phi(\bb{h})] \right. \\[1mm]
                &\hspace{10mm}- \left.
                \left\{\hspace{-1mm}
                \begin{array}{l}
                    \sum_{l=1}^s \EE G_{\beta,T}^{\times s} \big[\int_0^{\alpha} \bb{1}_{\{y < \rho(h_k,h_l)\}} dy ~ \phi(\bb{h})\big] \\[1mm]
                    - s \, \EE G_{\beta,T}^{\times (s+1)} \big[\int_0^{\alpha} \bb{1}_{\{y < \rho(h_k,h_{s+1})\}} dy ~ \phi(\bb{h})\big]
                \end{array}
                \hspace{-1.5mm}\right\}
            \right| = o_T(1).
            \end{aligned}
        \end{equation}
        Furthermore, from Proposition \ref{eq:bovier.kurkova.technique} in the special case $(s=1,k=1,\phi \equiv 1)$,
        \begin{equation}\label{thm:approximate.extended.GG.identities.eq.2}
            \begin{aligned}
            &\left|
                \frac{1}{\beta} \cdot \frac{\EE G_{\beta,T}[X_{h_k}(\alpha)]}{\frac{1}{2} \log \log T} \right. \\[1mm]
                &\hspace{10mm}- \left.
                \left\{\hspace{-1mm}
                \begin{array}{l}
                    \EE G_{\beta,T}^{\times s} \big[\int_0^{\alpha} \bb{1}_{\{y < \rho(h_k,h_k)\}} dy\big] \\[1mm]
                    - \EE G_{\beta,T}^{\times (s+1)} \big[\int_0^{\alpha} \bb{1}_{\{y < \rho(h_1,h_2)\}} dy\big]
                \end{array}
                \hspace{-1.5mm}\right\}
            \right| = O\hspace{-0.5mm}\left((\log \log T)^{-1}\right).
            \end{aligned}
        \end{equation}
        By combining \eqref{thm:approximate.extended.GG.identities.eq.1} and \eqref{thm:approximate.extended.GG.identities.eq.2}, we get the conclusion.
    \end{proof}

    By the representation theorem of Dovbysh and Sudakov \cite{MR666087} (for an accessible proof, see \cite{MR2679002}), we can show (see e.g.\hspace{-0.3mm} the reasoning on page 1459 of \cite{MR3211001} or page 101 of \cite{MR3052333}) that there exists a subsequence $\{T_m\}_{m\in \N^*}$ converging to $+\infty$ such that for any $s\in \N^*$ and any continuous function $\phi : [0,1]^{s(s-1)/2} \rightarrow \R$, we have
    \begin{equation}\label{eq:gibbs.measure.limit}
        \lim_{m\to\infty} \EE G_{\beta,T_m}^{\times \infty}\big[\phi((\rho(h_l,h_{l'}))_{1 \leq l,l' \leq s})\big] = E \mu_{\beta}^{\times \infty} \big[\phi((R_{l,l'})_{1 \leq l,l' \leq s})\big],
    \end{equation}
    where $R$ is a random element of some probability space with measure $P$ (and expectation $E$), generated by the random matrix of scalar products
    \begin{equation}\label{eq:matrix.scalar.products.H}
        (R_{l,l'})_{l,l'\in \N^*} = \big((\rho_l,\rho_{l'})_{\mathcal{H}}\big)_{l,l'\in \N^*},
    \end{equation}
    where $(\rho_l)_{l\in \N^*}$ is an i.i.d.\hspace{-0.3mm} sample from some random measure $\mu_{\beta}$ concentrated a.s.\hspace{-0.3mm} on the unit sphere of a separable Hilbert space $\mathcal{H}$.
    In particular, from Theorem \ref{thm:limiting.two.overlap.distribution}, we have
    \begin{equation}\label{eq:thm:limiting.two.overlap.distribution.eq.mu}
        E \mu_{\beta}^{\times 2} \big[\bb{1}_{\{R_{1,2} \in A\}}\big] = \frac{2}{\beta} \bb{1}_A(0) + \left(1 - \frac{2}{\beta}\right) \bb{1}_A(1), \quad A\in \mathcal{B}([0,1]).
    \end{equation}

    Next, we show the consequence of taking the limit \eqref{eq:gibbs.measure.limit} in the statement of Theorem \ref{thm:approximate.extended.GG.identities}.
    Note that a function $\phi : \{0,1\}^{s(s-1)/2} \rightarrow \R$ can always be embedded in a continuous function defined on $[0,1]^{s(s-1)/2}$.
    Here is the main result of this section.

    \begin{theorem}[Extended Ghirlanda-Guerra identities in the limit]\label{thm:extended.GG.identities}
        Let $\beta > \beta_c \circeq 2$ and $0 < \alpha < 1$. Also, let $\mu_{\beta}$ be a subsequential limit of $\{G_{\beta,T}\}_{T\geq 2}$ in the sense of \eqref{eq:gibbs.measure.limit}.
        For any $s\in \N^*$, any $k\in \{1,\ldots,s\}$, and any functions $\psi : \{0,1\} \rightarrow \R$ and $\phi : \{0,1\}^{s(s-1)/2} \rightarrow \R$, we have
        \begin{equation}\label{thm:extended.GG.identities.to.prove}
            \begin{aligned}
                E \mu_{\beta}^{(s+1)} \big[\psi(R_{k,s+1}) \phi((R_{i,i'})_{1\leq i,i' \leq s})\big]
                &= \frac{1}{s} E \mu_{\beta}^{\times 2} \big[\psi(R_{1,2})\big] E \mu_{\beta}^{\times s}\big[\phi((R_{i,i'})_{1\leq i,i' \leq s})\big] \\
                &+ \frac{1}{s} \sum_{l \neq k}^s E \mu_{\beta}^{\times s} \big[\psi(R_{k,l}) \phi((R_{i,i'})_{1\leq i,i' \leq s})\big].
            \end{aligned}
        \end{equation}
    \end{theorem}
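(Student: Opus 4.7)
The plan is to derive Theorem \ref{thm:extended.GG.identities} from the approximate identity of Theorem \ref{thm:approximate.extended.GG.identities} together with the limiting two-overlap distribution \eqref{eq:thm:limiting.two.overlap.distribution.eq.mu}, in three stages: apply the approximate identity with a carefully chosen test function, pass to the subsequential limit via \eqref{eq:gibbs.measure.limit}, and strip the parameter $\alpha$ using the fact that overlaps only charge $\{0,1\}$ under $\mu_\beta$.

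First I would observe that $\int_0^{\alpha} \bb{1}_{\{y < \rho\}}\,dy = \rho \wedge \alpha$, and apply Theorem \ref{thm:approximate.extended.GG.identities} with the bounded measurable test function $\phi(\bb{h}) \circeq \widetilde{\phi}\bigl((\rho(h_i,h_{i'}))_{1 \leq i < i' \leq s}\bigr)$ for an arbitrary continuous $\widetilde{\phi} : [0,1]^{s(s-1)/2} \to \R$. The approximate identity becomes
\begin{equation*}
\EE G_{\beta,T}^{\times (s+1)}\bigl[(\rho(h_k,h_{s+1}) \wedge \alpha)\,\widetilde{\phi}\bigr] = \frac{1}{s}\,\EE G_{\beta,T}^{\times 2}[\rho(h_1,h_2) \wedge \alpha]\,\EE G_{\beta,T}^{\times s}[\widetilde{\phi}] + \frac{1}{s}\sum_{l \neq k}^{s} \EE G_{\beta,T}^{\times s}\bigl[(\rho(h_k,h_l) \wedge \alpha)\,\widetilde{\phi}\bigr] + o_T(1).
\end{equation*}
Each of the four expectations above is the integral of a continuous bounded function of the overlaps of at most $s+1$ replicas, so \eqref{eq:gibbs.measure.limit} lets me pass to the limit along the subsequence $\{T_m\}$ to obtain the exact identity
\begin{equation*}
E \mu_\beta^{\times (s+1)}\bigl[(R_{k,s+1} \wedge \alpha)\,\widetilde{\phi}\bigr] = \frac{1}{s}\,E \mu_\beta^{\times 2}[R_{1,2} \wedge \alpha]\,E \mu_\beta^{\times s}[\widetilde{\phi}] + \frac{1}{s}\sum_{l \neq k}^{s} E \mu_\beta^{\times s}\bigl[(R_{k,l} \wedge \alpha)\,\widetilde{\phi}\bigr].
\end{equation*}

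By \eqref{eq:thm:limiting.two.overlap.distribution.eq.mu} and the replica symmetry of $\mu_\beta^{\times \infty}$, each overlap $R_{k,l}$ with $k \neq l$ is almost surely in $\{0,1\}$; since $\alpha \in (0,1)$, this gives $R_{k,l} \wedge \alpha = \alpha\, R_{k,l}$ almost surely. Dividing the displayed identity by $\alpha$ yields \eqref{thm:extended.GG.identities.to.prove} in the special case $\psi(r) = r$.

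To finish, I would extend to arbitrary $\psi : \{0,1\} \to \R$ and $\phi : \{0,1\}^{s(s-1)/2} \to \R$ by linearity. Writing $\psi(r) = \psi(0) + (\psi(1) - \psi(0))\,r$, it suffices to settle the cases $\psi \equiv 1$ and $\psi(r) = r$: the constant case is immediate since both sides collapse to $E \mu_\beta^{\times s}[\phi]$ by Fubini and $E \mu_\beta^{\times 2}[1] = 1$, while the linear case was just established by taking $\widetilde{\phi}$ to be the multilinear (hence continuous) extension of $\phi$ to $[0,1]^{s(s-1)/2}$ and noting that $R_{i,i'} \in \{0,1\}$ almost surely, so $\widetilde{\phi}((R_{i,i'})) = \phi((R_{i,i'}))$ almost surely. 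There is no substantial obstacle; the only point requiring care is that the test function must be continuous for \eqref{eq:gibbs.measure.limit} to apply, which is why I work with the continuous extension $\widetilde{\phi}$ before collapsing to its restriction on $\{0,1\}^{s(s-1)/2}$.
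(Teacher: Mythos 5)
Your proposal is correct and follows essentially the same route as the paper: apply the approximate identities to a continuous function of the overlaps, pass to the subsequential limit via the Dovbysh--Sudakov representation, and then exploit that $R_{i,i'}$ is $\{0,1\}$-valued a.s. to strip out the $\alpha$-dependence and extend by linearity. The only cosmetic difference is that you reduce $\psi$ to the basis $\{1, r\}$ on $\{0,1\}$, whereas the paper uses the equivalent basis $\{\bb{1}_{\{-1 < \cdot\}}, \bb{1}_{\{0 < \cdot\}}\}$.
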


    \begin{remark}
        The functions $\psi$ and $\phi$ have $\{0,1\}$ and $\{0,1\}^{s(s-1)/2}$ as their domain, respectively, because $R_{l,l'}\in \{0,1\}$ $E \mu_{\beta}^{\times 2}$-almost-surely by \eqref{eq:thm:limiting.two.overlap.distribution.eq.mu} and the matrix $(R_{l,l'})_{1 \leq l,l' \leq s}$ is symmetric and its diagonal elements are equal to $1$ $E \mu_{\beta}^{\times s}$-almost-surely by \eqref{eq:matrix.scalar.products.H}.
    \end{remark}

    \begin{proof}[Proof of Theorem \ref{thm:extended.GG.identities}]
        From \eqref{eq:gibbs.measure.limit} and Theorem \ref{thm:approximate.extended.GG.identities} (in the particular case where $\phi$ is a function of the overlaps), we deduce
        \begin{equation}
            \begin{aligned}
            &E \mu_{\beta}^{(s+1)} \big[\int_0^{\alpha} \bb{1}_{\{y < R_{k,s+1}\}} dy \, \phi((R_{i,i'})_{1 \leq i,i' \leq s})\big] \\[1mm]
            &\hspace{15mm}= \frac{1}{s} E \mu_{\beta}^{\times 2} \big[\int_0^{\alpha} \bb{1}_{\{y < R_{1,2}\}} dy\big] E \mu_{\beta}^{\times s}\big[\phi((R_{i,i'})_{1 \leq i,i' \leq s})\big] \\
            &\hspace{15mm}+ \frac{1}{s} \sum_{l \neq k}^s E \mu_{\beta}^{\times s} \big[\int_0^{\alpha} \bb{1}_{\{y < R_{k,l}\}} dy \, \phi((R_{i,i'})_{1 \leq i,i' \leq s})\big].
            \end{aligned}
        \end{equation}
        From \eqref{eq:thm:limiting.two.overlap.distribution.eq.mu}, we know that $\bb{1}_{\{y < R_{i,i'}\}}$ is $E \mu_{\beta}^{\times 2}$-a.s.\hspace{-0.3mm} constant in $y$ on $[-1,0)$ and $[0,1)$ respectively. Therefore, for any $x\in \{-1,0\}$,
        \begin{equation}\label{thm:approximate.extended.GG.identities.finish}
            \begin{aligned}
                &E \mu_{\beta}^{(s+1)} \big[\bb{1}_{\{x < R_{k,s+1}\}} \phi((R_{i,i'})_{1 \leq i,i' \leq s})\big] \\[2mm]
                &\hspace{15mm}= \frac{1}{s} E \mu_{\beta}^{\times 2} \big[\bb{1}_{\{x < R_{1,2}\}}\big] E \mu_{\beta}^{\times s}\big[\phi((R_{i,i'})_{1 \leq i,i' \leq s})\big] \\
                &\hspace{15mm}+ \frac{1}{s} \sum_{l \neq k}^s E \mu_{\beta}^{\times s} \big[\bb{1}_{\{x < R_{k,l}\}} \phi((R_{i,i'})_{1 \leq i,i' \leq s})\big].
            \end{aligned}
        \end{equation}
        But, any function $\psi : \{0,1\} \rightarrow \R$ can be written as a linear combination of the indicator functions $\bb{1}_{\{0 < \, \cdot \, \}}$ and $\bb{1}_{\{-1 < \, \cdot \, \}}$, so we get the conclusion by the linearity of \eqref{thm:approximate.extended.GG.identities.finish}.
    \end{proof}

    \section{Proof of Theorem \ref{thm:Poisson.Dirichlet}}\label{sec:proof.min.result}

        Once we have Theorem \ref{thm:limiting.two.overlap.distribution} and the Ghirlanda-Guerra identities from Theorem \ref{thm:extended.GG.identities}, the proof follows exactly the same steps as in the proof of Theorem 1.5 in \cite{MR3211001}.
        We can show that any subsequential limit $\mu_{\beta}$ of $\{G_{\beta,T}\}_{T\geq 2}$ in the sense of \eqref{eq:gibbs.measure.limit} must satisfy
        \begin{equation}\label{eq:1.RPC}
            \mu_{\beta} = \sum_{k\in \N^*} \xi_k \delta_{e_k}, \quad P-a.s.,
        \end{equation}
        where $\delta$ is the Dirac measure, $(e_k)_{k\in \N^*}$ is a sequence of orthonormal vectors in $\mathcal{H}$ and $\xi$ is a Poisson-Dirichlet variable of parameter $\beta_c / \beta$. Since the space of probability measures on $[0,1]^{\N^* \times \N^*}$ (the space of overlap matrices) is a metric space under the weak topology, the limit in \eqref{eq:gibbs.measure.limit} must hold for the original sequence. Then, \eqref{eq:thm:Poisson.Dirichlet.eq} is a direct consequence of \eqref{eq:1.RPC}.

%
%

\bibliographystyle{amsplain}
\bibliography{Ouimet_2018_GG_bib}

\ACKNO{I would like to thank the anonymous referees and my advisor, Louis-Pierre Arguin, for their valuable comments that led to improvements in the presentation of this paper.}

\end{document}